\def\csname ver@fixltx2e.sty\endcsname{}
\let\proof\relax 
\let\endproof\relax
\newlength{\dhatheight}
\newcommand{\doublehat}[1]{%
    \settoheight{\dhatheight}{\ensuremath{\hat{#1}}}%
    \addtolength{\dhatheight}{-0.25ex}%
    \hat{\vphantom{\rule{1pt}{\dhatheight}}%
    \smash{\hat{#1}}}}
\newtheorem{theorem}{Theorem}
\newtheorem{definition}{Definition}
\newtheorem{proposition}{Proposition}
\newtheorem{remark}{Remark}
\begin{document}

\title{Inverse Cubature and Quadrature Kalman Filters}

\author{Himali Singh, Kumar Vijay Mishra$^\ast$ and Arpan Chattopadhyay$^\ast$\vspace{-24pt}
\thanks{$^\ast$K. V. M. and A. C. have made equal contributions.}
\thanks{H. S. and A. C. are with the Electrical Engineering Department, Indian Institute of Technology (IIT) Delhi, India. A. C. is also associated with the Bharti School of Telecommunication Technology and Management, IIT Delhi. Email: \{eez208426, arpanc\}@ee.iitd.ac.in. K. V. M. is with the United States DEVCOM Army Research Laboratory, Adelphi, MD 20783 USA. E-mail: kvm@ieee.org. A. C. acknowledges support via the professional development fund and professional development allowance from IIT Delhi, grant no. GP/2021/ISSC/022 from I-Hub Foundation for Cobotics and grant no. CRG/2022/003707 from Science and Engineering Research Board (SERB), India. H. S. acknowledges support via Prime Minister Research Fellowship. K. V. M. acknowledges support from the National Academies of Sciences, Engineering, and Medicine via the Army Research Laboratory Harry Diamond Distinguished Fellowship.}
}

\maketitle

\begin{abstract}
Recent research in inverse cognition with cognitive radar has led to the development of inverse stochastic filters that are employed by the target to infer the information the cognitive radar may have learned. Prior works addressed this inverse cognition problem by proposing inverse Kalman filter (I-KF) and inverse extended KF (I-EKF), respectively, for linear and non-linear Gaussian state-space models. However, in practice, many counter-adversarial settings involve highly non-linear system models, wherein EKF's linearization often fails. In this paper, we consider the efficient numerical integration techniques to address such non-linearities and, to this end, develop inverse cubature KF (I-CKF), inverse quadrature KF (I-QKF), and inverse cubature-quadrature KF (I-CQKF). For the unknown system model case, we develop reproducing kernel Hilbert space (RKHS)-based CKF. We derive the stochastic stability conditions for the proposed filters in the exponential-mean-squared-boundedness sense and prove the filters' consistency. Numerical experiments demonstrate the estimation accuracy of our I-CKF, I-QKF, and I-CQKF with the recursive Cram\'{e}r-Rao lower bound as a benchmark.
\end{abstract}

\begin{IEEEkeywords}
Bayesian filtering, cognitive radar, cubature Kalman filter, inverse cognition, quadrature Kalman filter.
\end{IEEEkeywords}

\section{Introduction}
\label{sec:introduction}
Autonomous cognitive agents continually sense their surroundings and optimally adapt themselves in response to changes in the environment. For instance, a cognitive radar\cite{mishra2020toward} may tune parameters of its transmit waveform and switch the receive processing for enhanced target detection \cite{mishra2017performance} and tracking\cite{bell2015cognitive,sharaga2015optimal}. Recently, motivated by the design of counter-adversarial systems, \textit{inverse cognition} has been proposed to detect, estimate, and predict the behavior of cognitive agents \cite{krishnamurthy2019how,krishnamurthy2020identifying}. For instance, an intelligent target may observe the actions of a cognitive radar that is trying to detect the former. The target then attempts to predict the radar's future actions in a Bayesian sense. Smart interference can be further designed to force the adversarial radar to change its actions\cite{krishnamurthy2021adversarial,kang2023}. To this end, \cite{krishnamurthy2020identifying} has developed stochastic revealed preferences-based algorithms to ascertain if the adversary optimizes a utility function and, if so, estimate that function. An inverse cognitive agent then requires an estimate of the adversarial system's (or \textit{forward} cognitive agent's) inference. This is precisely the objective of an inverse Bayesian filter, whose goal is to estimate the posterior distribution computed by a forward Bayesian filter given noisy measurements of the posterior\cite{krishnamurthy2019how}.

Stating in the most general terms, the inverse cognition problem involves two agents: a `defender' (e.g., an intelligent target) and an `attacker' (e.g., a cognitive radar) that is equipped with a Bayesian filter. The attacker estimates the defender's state and then takes action based on this estimate. The defender observes these actions of the attacker and then computes \textit{an estimate of the attacker's estimate} via an inverse filter. There are also parallels to these inverse problems in the machine learning literature. For instance, inverse reinforcement learning (IRL) problems require the adversary's reward function to be learned \textit{passively} by observing its optimal behaviour\cite{ng2000algorithms}. The inverse cognition differs from IRL in the sense that the defender \textit{actively} probes its adversarial agent.

Prior research on inverse Bayesian filtering includes inverse hidden Markov model\cite{mattila2020hmm} for finite state-space and inverse Kalman filter (I-KF)\cite{krishnamurthy2019how} for linear Gaussian state-space models. 
These works do not address the highly non-linear counter-adversarial systems encountered in practice. In this regard, our recent work proposed inverse extended KF (I-EKF) for non-linear system settings in \cite{singh2022inverse,singh2022inverse_part1}. However, even EKF performs poorly in case of severe non-linearities and modeling errors\cite{tenney1977tracking}, for which our follow-up work \cite{singh2022inverse_part2} introduced inverses of several EKF variants such as high-order and dithered EKFs. 

A more accurate approximation of a non-linear Bayesian filter than the advanced EKF variants is possible through derivative-free Gaussian sigma-point KFs (SPKFs). 
These filters generate deterministic points and propagate them through the non-linear functions to approximate the mean and covariance of the posterior density. While EKF is applicable to differentiable functions, SPKFs handle discontinuities. A popular SPKF is the unscented KF (UKF) \cite{julier2004unscented}, which utilizes unscented transform to generate sigma points and approximates the mean and covariance of a Gaussian distribution under non-linear transformation. The basic intuition of unscented transform is that it is easier to approximate a probability distribution than it is to approximate an arbitrary non-linear function \cite{julier2004unscented}. EKF, on the other hand, considers a linear approximation for the non-linear functions. The corresponding inverse UKF (I-UKF) was proposed in our recent work \cite{singh2023counter,singh2023iukf}. 

The SPKF performance is further improved by employing better numerical integration techniques to calculate the recursive integrals in Bayesian estimation. For example, cubature KF (CKF) \cite{arasaratnam2009cubature} and quadrature KF (QKF) \cite{ito2000gaussian,arasaratnam2007qkf} numerically approximate the multidimensional integral based on, respectively, cubature and Gauss-Hermite quadrature rules. The cubature-quadrature KF (CQKF)\cite{bhaumik2013cubature} uses the cubature and quadrature rules together while central difference KF\cite{ito2000gaussian} considers polynomial interpolation methods with central difference approximation of the derivatives. In practice, a non-linear filter's performance also depends on the system itself. Selecting the most appropriate filter for a given application typically entails striking a balance between estimation precision and computational complexity \cite{li2017approximate}.

Analogous to EKFs, several advanced SPKFs have also been developed for improving either estimation accuracy (higher-degree CKFs\cite{jia2013high}) or numerical stability (square-root CKF\cite{arasaratnam2009cubature}). On the other hand, Gaussian-sum QKF (GS-QKF)\cite{kottakki2014state,arasaratnam2007qkf} handles non-Gaussian system models by representing the posterior density as a weighted sum of finite Gaussians. However, GS filters are computationally expensive, and hence, adaptive GS filters have been proposed\cite{bhaumik2019nonlinear}, wherein the Gaussian components are adapted or reduced online for efficient representation of the posterior density.

\textbf{Contributions:} In this paper, we develop inverse filters based on the afore-referenced efficient numerical integration techniques, namely, inverse CKF (I-CKF), inverse QKF (I-QKF), and inverse CQKF (I-CQKF). To this end, similar to the inverse cognition framework in \cite{krishnamurthy2019how,mattila2020hmm}, we assume perfect system information. These methods can also be readily generalized to non-Gaussian, continuous-time state evolution or complex-valued system cases. When the system model is not known, our prior works \cite{singh2022inverse_part2,singh2023iukf} addressed this case by employing parameter learning in the reproducing kernel Hilbert space (RKHS). In this paper, we develop RKHS-CKF based on the cubature rules. We then derive the stability conditions for the proposed I-CKF and show that the recursive estimates are also consistent. Our theoretical analyses show that the forward filter's stability is sufficient to guarantee the same for the inverse filter under mild conditions imposed on the system. In the process, we also obtain improved stability results, hitherto unreported in the literature, for the forward CKF. Our numerical experiments demonstrate the proposed methods' performance compared to the recursive Cram\'{e}r-Rao lower bound (RCRLB) \cite{tichavsky1998posterior}.

The rest of the paper is organized as follows. The next section describes the system model for the inverse cognition problem. In Section~\ref{sec:ICKF and IQKF}, we derive I-CKF, I-QKF, I-CQKF, and RKHS-CKF. We provide the stability and consistency conditions in Section~\ref{sec:perfanalyses} while numerical examples are shown in Section~\ref{sec:numericals}. We conclude in Section~\ref{sec:summary}.

Throughout the paper, we reserve boldface lowercase and uppercase letters for vectors (column vectors) and matrices, respectively, and $\lbrace a_{i}\rbrace_{i_{1}\leq i\leq i_{2}}$ denotes a set of elements indexed by an integer $i$. The notation $[\mathbf{a}]_{i}$ is used to denote the $i$-th component of vector $\mathbf{a}$ and $[\mathbf{A}]_{i,j}$ denotes the $(i,j)$-th component of matrix $\mathbf{A}$. The transpose/ Hermitian operation is $(\cdot)^{T/H}$; the $l_{2}$ norm of a vector is $\|\cdot\|_{2}$; and the spectral norm of a matrix is $\|\cdot\|$. For matrices $\mathbf{A}$ and $\mathbf{B}$, the inequality $\mathbf{A}\preceq\mathbf{B}$ means that $\mathbf{B}-\mathbf{A}$ is a positive semidefinite (p.s.d.) matrix. For a function $f:\mathbb{R}^{n}\rightarrow\mathbb{R}^{m}$, $\frac{\partial f}{\partial\mathbf{x}}$ denotes the $\mathbb{R}^{m\times n}$ Jacobian matrix. Also, $\mathbf{I}_{n}$ and $\mathbf{0}_{n\times m}$ denote a `$n\times n$' identity matrix and a `$n\times m$' all zero matrix, respectively. The Gaussian random variable is represented as $\mathbf{x} \sim \mathcal{N}(\boldsymbol{\mu},\mathbf{Q})$ with mean $\boldsymbol{\mu}$ and covariance matrix $\mathbf{Q}$. For simplicity, we consider Cholesky decomposition to obtain the factorization of matrix $\mathbf{A}$ as $\mathbf{A}=\sqrt{\mathbf{A}}\sqrt{\mathbf{A}}^{T}$. Other robust but computationally expensive factorization methods include singular value and eigenvector decomposition\cite{arasaratnam2007qkf,bierman2006factorization}.

\section{System model}
\label{sec:system model}
Denote the defender's state at $k$-th time as $\mathbf{x}_{k}\in\mathbb{R}^{n_{x}\times 1}$. Consider the defender's discrete-time (stochastic) state evolution process $\{\mathbf{x}_{k}\}_{k\geq 0}$ as
\par\noindent\small
\begin{align}
    \mathbf{x}_{k+1}=f(\mathbf{x}_{k})+\mathbf{w}_{k},\label{eqn:state transition x}
\end{align}
\normalsize
where $\mathbf{w}_{k}\sim\mathcal{N}(\mathbf{0}_{n_{x}\times 1},\mathbf{Q})$ is the process noise with covariance matrix $\mathbf{Q}\in\mathbb{R}^{n_{x}\times n_{x}}$. Note that \eqref{eqn:state transition x} represents a Markov process, i.e., given state $\mathbf{x}_{k}$, the future state $\mathbf{x}_{k+1}$ depends only on $\mathbf{w}_{k}$, which is independent of the past states $\{\mathbf{x}_{i}\}_{0\leq i\leq k-1}$. This discrete-time stochastic state evolution, adhering to the Markov property and underlying state as real-valued vectors, is a fundamental assumption in Bayesian filtering problems; see further \cite[Chapter~3]{jazwinski2007stochastic} for more details. The defender knows its own state $\mathbf{x}_{k}$ perfectly. The attacker observes the defender's state as $\mathbf{y}_{k}\in\mathbb{R}^{n_{y}\times 1}$ at $k$-th time as
\par\noindent\small
\begin{align}
    \mathbf{y}_{k}=h(\mathbf{x}_{k})+\mathbf{v}_{k},\label{eqn:observation y}
\end{align}
\normalsize
where $\mathbf{v}_{k}\sim\mathcal{N}(\mathbf{0}_{n_{y}\times 1},\mathbf{R})$ is the attacker's measurement noise with covariance matrix $\mathbf{R}\in\mathbb{R}^{n_{y}\times n_{y}}$. Using the forward filter, the attacker then computes an estimate $\hat{\mathbf{x}}_{k}$ of the defender's state $\mathbf{x}_{k}$ from observations $\{\mathbf{y}_{j}\}_{1\leq j\leq k}$. The attacker further takes an action which the defender observes as $\mathbf{a}_{k}\in\mathbb{R}^{n_{a}\times 1}$ as
\par\noindent\small
\begin{align}
    \mathbf{a}_{k}=g(\hat{\mathbf{x}}_{k})+\bm{\epsilon}_{k},\label{eqn:observation a}
\end{align}
\normalsize
where $g(\hat{\mathbf{x}}_{k})$ denotes the composite function of the attacker's action strategy and defender's observation while $\bm{\epsilon}_{k}\sim\mathcal{N}(\mathbf{0}_{n_{a}\times 1},\bm{\Sigma}_{\epsilon})$ is the defender's measurement noise with covariance matrix $\bm{\Sigma}_{\epsilon}\in\mathbb{R}^{n_{a}\times n_{a}}$.
\begin{figure}
  \centering
  \includegraphics[width = 0.85\columnwidth]{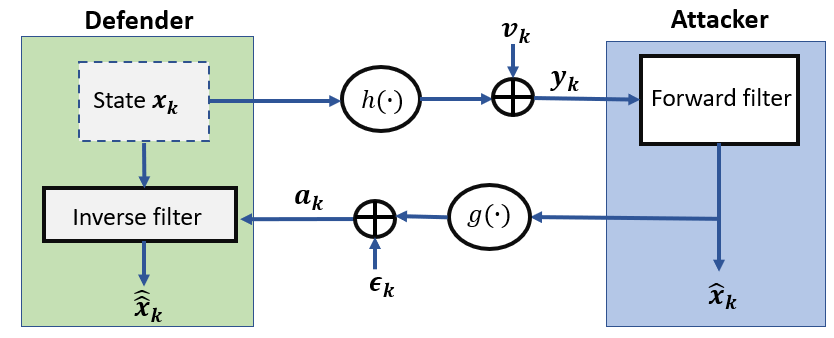}
  \caption{An illustration of defender-attacker interaction. The defender's true state at $k$-th time step is $\mathbf{x}_{k}$. The attacker observes $\mathbf{x}_{k}$ as $\mathbf{y}_{k}$ through observation function $h(\cdot)$ with measurement noise $\mathbf{v}_{k}$. With $\mathbf{y}_{k}$ as input, the attacker's forward filter computes state estimate $\hat{\mathbf{x}}_{k}$, which the defender observes as $\mathbf{a}_{k}$ through observation function $g(\cdot)$ with measurement noise $\bm{\epsilon}_{k}$. Finally, the defender's inverse filter provides estimate $\doublehat{\mathbf{x}}_{k}$ of $\hat{\mathbf{x}}_{k}$ with $\mathbf{x}_{k}$ and $\mathbf{a}_{k}$ as inputs.}
 \label{fig:schematic}
\end{figure}

The defender's goal is to compute an estimate $\doublehat{\mathbf{x}}_{k}$ of the attacker's estimate $\hat{\mathbf{x}}_{k}$ given $\{\mathbf{a}_{j},\mathbf{x}_{j}\}_{1\leq j\leq k}$ in the inverse filter. The functions $f(\cdot)$, $h(\cdot)$ and $g(\cdot)$ are non-linear functions, while the noise processes $\{\mathbf{w}_{k}\}_{k\geq 0}$, $\{\mathbf{v}_{k}\}_{k\geq 1}$ and $\{\bm{\epsilon}_{k}\}_{k\geq 1}$ are mutually independent and identically distributed across time. Fig.~\ref{fig:schematic} provides an illustration of the defender-attacker dynamics. Throughout the paper, we assume these functions and noise distributions are known to both agents. The defender also knows the attacker's forward filter. Following the seminal CKF and QKF works \cite{arasaratnam2009cubature,arasaratnam2007qkf}, we consider discrete-time system models throughout the paper. We discuss the modifications of the developed inverse filters to handle non-Gaussian noise, continuous-time state-evolution process and complex-valued systems in Sections~\ref{subsubsec:non-gaussian}, \ref{subsubsec:continuous-discrete} and \ref{subsubsec:complex}, respectively. The case of the unknown system model may be handled using RKHS-CKF considered in Section~\ref{subsec:RKHS}.

\section{Inverse Filter Models}
\label{sec:ICKF and IQKF}
The (forward) CKF generates a set of `$2n_{x}$' cubature points deterministically about the state estimate based on the third-degree spherical-radial cubature rule to numerically compute a standard Gaussian weighted non-linear integral \cite{arasaratnam2009cubature}. Similarly, the ($m$-point) QKF employs a $m$-point Gauss-Hermite quadrature rule to generate $m^{n_{x}}$ quadrature points\cite{ito2000gaussian}. In \cite{arasaratnam2007qkf}, QKF was reformulated using statistical linear regression, wherein the linearized function is more accurate in a statistical sense than EKF's first-order Taylor series approximation. The CQKF generalizes CKF by efficiently employing the cubature and quadrature integration rules together\cite{bhaumik2013cubature}. In particular, the $n_{x}$-dimensional recursive Bayesian integral is decomposed into a surface and line integral approximated using third-degree spherical cubature and one-dimensional Gauss-Laguerre quadrature rules, respectively. 

\subsection{Inverse CKF}
\label{subsec:ICKF}
Denote the $i$-th standard basis vector in $\mathbb{R}^{n_{x}\times 1}$ by $\mathbf{e}_{i}$. Define $\bm{\xi}_{i}$ as the $i$-th element of the $2n_{x}$-points set 
\small
$\{\sqrt{n_{x}}\mathbf{e}_{1},\sqrt{n_{x}}\mathbf{e}_{2},\hdots,\sqrt{n_{x}}\mathbf{e}_{n_{x}},-\sqrt{n_{x}}\mathbf{e}_{1},\hdots,-\sqrt{n_{x}}\mathbf{e}_{n_{x}}\}$. \normalsize
The cubature points generated from a state estimate $\hat{\mathbf{x}}$ and its error covariance matrix $\bm{\Sigma}$ are $\widetilde{\mathbf{x}}_{i}=\hat{\mathbf{x}}+\sqrt{\bm{\Sigma}}\bm{\xi}_{i}$ for all $i=1,2,\hdots,2n_{x}$ with each weight $\omega_{i}=1/2n_{x}$. The third-degree cubature rule is exact for polynomials up to the third degree and computes the posterior mean accurately but the error covariance approximately. It has been reported in \cite{arasaratnam2009cubature} that higher-degree rules may not necessarily yield any improvement in the CKF performance.

\textbf{Forward CKF:} Denote the forward CKF's cubature points generated for the time and measurement update by, respectively, $\{\mathbf{s}_{i,k}\}_{1\leq i\leq 2n_{x}}$ and $\{\mathbf{q}_{i,k+1|k}\}_{1\leq i\leq 2n_{x}}$. For time update, the cubature points $\{\mathbf{s}_{i,k}\}$ are propagated to $\{\mathbf{s}^{*}_{i,k+1|k}\}$ through state transition in \eqref{eqn:state transition x}. The predicted state $\hat{\mathbf{x}}_{k+1|k}$ and prediction error covariance matrix $\bm{\Sigma}_{k+1|k}$ are obtained as the weighted average of these propagated points. Similarly, the measurement update involves propagating cubature points $\{\mathbf{q}_{i,k+1|k}\}$ to $\{\mathbf{q}^{*}_{i,k+1|k}\}$ through observation \eqref{eqn:observation y} for the predicted observation $\hat{\mathbf{y}}_{k+1|k}$. The state estimate $\hat{\mathbf{x}}_{k+1}$ and the associated error covariance matrix $\bm{\Sigma}_{k+1}$ are computed recursively by the forward CKF as \cite{arasaratnam2009cubature}
\par\noindent\small
\begin{align}
    &\textit{Time update:}\;\;\;\mathbf{s}_{i,k}=\hat{\mathbf{x}}_{k}+\sqrt{\bm{\Sigma}_{k}}\bm{\xi}_{i}\;\;\;\forall\;\; i=1,2,\hdots,2n_{x},\label{eqn:forward ckf prediction sigma points}\\
    &\mathbf{s}^{*}_{i,k+1|k}=f(\mathbf{s}_{i,k})\;\;\;\forall i=1,2,\hdots,2n_{x},\\
    &\hat{\mathbf{x}}_{k+1|k}=\sum_{i=1}^{2n_{x}}\omega_{i}\mathbf{s}^{*}_{i,k+1|k},\label{eqn:forward ckf x predict}\\
    &\bm{\Sigma}_{k+1|k}=\sum_{i=1}^{2n_{x}}\omega_{i}\mathbf{s}^{*}_{i,k+1|k}(\mathbf{s}^{*}_{i,k+1|k})^{T}-\hat{\mathbf{x}}_{k+1|k}\hat{\mathbf{x}}_{k+1|k}^{T}+\mathbf{Q},\\
    &\textit{Measurement update:}\nonumber\\
    &\mathbf{q}_{i,k+1|k}=\hat{\mathbf{x}}_{k+1|k}+\sqrt{\bm{\Sigma}_{k+1|k}}\bm{\xi}_{i}\;\;\;\forall\;\; i=1,2,\hdots,2n_{x},\label{eqn:forward CKF update sigma points}\\
    &\mathbf{q}^{*}_{i,k+1|k}=h(\mathbf{q}_{i,k+1|k})\;\;\;\forall i=1,\hdots,2n_{x},\\
    &\hat{\mathbf{y}}_{k+1|k}=\sum_{i=1}^{2n_{x}}\omega_{i}\mathbf{q}^{*}_{i,k+1|k},\label{eqn:forward ckf y predict}\\
    &\bm{\Sigma}^{y}_{k+1}=\sum_{i=1}^{2n_{x}}\omega_{i}\mathbf{q}^{*}_{i,k+1|k}(\mathbf{q}^{*}_{i,k+1|k})^{T}-\hat{\mathbf{y}}_{k+1|k}\hat{\mathbf{y}}_{k+1|k}^{T}+\mathbf{R},\\
    &\bm{\Sigma}^{xy}_{k+1}=\sum_{i=1}^{2n_{x}}\omega_{i}\mathbf{q}_{i,k+1|k}(\mathbf{q}^{*}_{i,k+1|k})^{T}-\hat{\mathbf{x}}_{k+1|k}\hat{\mathbf{y}}_{k+1|k}^{T},\\
    &\mathbf{K}_{k+1}=\bm{\Sigma}^{xy}_{k+1}\left(\bm{\Sigma}^{y}_{k+1}\right)^{-1},\\
    &\hat{\mathbf{x}}_{k+1}=\hat{\mathbf{x}}_{k+1|k}+\mathbf{K}_{k+1}(\mathbf{y}_{k+1}-\hat{\mathbf{y}}_{k+1|k}),\label{eqn:forward ckf x update}\\
    &\bm{\Sigma}_{k+1}=\bm{\Sigma}_{k+1|k}-\mathbf{K}_{k+1}\bm{\Sigma}^{y}_{k+1}\mathbf{K}_{k+1}^{T}.\label{eqn:forward CKF sigma update}   
\end{align}
\normalsize

\textbf{I-CKF:} The inverse filter treats $\hat{\mathbf{x}}_{k}$ as the state to be estimated using observations \eqref{eqn:observation a}. Note that even when assuming perfect system model information and a known forward CKF, the estimate $\hat{\mathbf{x}}_{k}$ is unknown to the defender. In particular, the forward CKF computes $\hat{\mathbf{x}}_{k}$ given attacker's noisy observations $\{\mathbf{y}_{j}\}_{1\leq j\leq k}$ up to $k$-th time instant, which are not available to the defender. Hence, the defender computes an estimate $\doublehat{\mathbf{x}}_{k}$ of $\hat{\mathbf{x}}_{k}$ given its observations $\{\mathbf{a}_{j}\}_{1\leq j\leq k}$ and true states $\{\mathbf{x}_{j}\}_{1\leq j\leq k}$. Substituting in \eqref{eqn:forward ckf x update} for observation $\mathbf{y}_{k+1}$, predicted state $\hat{\mathbf{x}}_{k+1|k}$ and predicted observation $\hat{\mathbf{y}}_{k+1|k}$ from \eqref{eqn:observation y}, \eqref{eqn:forward ckf x predict} and \eqref{eqn:forward ckf y predict}, respectively, we obtain the state evolution to be tracked by the I-CKF as
\par\noindent\small
\begin{align}
    \hat{\mathbf{x}}_{k+1}&=\frac{1}{2n_{x}}\sum_{i=1}^{2n_{x}}\left(\mathbf{s}^{*}_{i,k+1|k}-\mathbf{K}_{k+1}\mathbf{q}^{*}_{i,k+1|k}\right)+\mathbf{K}_{k+1}h(\mathbf{x}_{k+1})+\mathbf{K}_{k+1}\mathbf{v}_{k+1}.\label{eqn:ICKF state transition detail}
\end{align}
\normalsize
The propagated points $\{\mathbf{s}^{*}_{i,k+1|k}\}$ and $\{\mathbf{q}^{*}_{i,k+1|k}\}$, and gain matrix $\mathbf{K}_{k+1}$ are deterministic functions of the first set of cubature points $\{\mathbf{s}_{i,k}\}$. Further, the cubature points $\{\mathbf{s}_{i,k}\}$ depend only on the previous estimate $\hat{\mathbf{x}}_{k}$ and its error covariance matrix $\bm{\Sigma}_{k}$ through \eqref{eqn:forward ckf prediction sigma points}. Hence, \eqref{eqn:ICKF state transition detail} becomes
\par\noindent\small
\begin{align}
     \hat{\mathbf{x}}_{k+1}=\widetilde{f}(\hat{\mathbf{x}}_{k},\bm{\Sigma}_{k},\mathbf{x}_{k+1},\mathbf{v}_{k+1}).\label{eqn:ICKF state transition}
\end{align}
\normalsize

The actual state $\mathbf{x}_{k+1}$ known perfectly to the defender acts as a known exogenous input. The process noise $\mathbf{v}_{k+1}$ is \textit{non-additive} because of the dependence of $\mathbf{K}_{k+1}$ on the previous state estimates. Note that the current observation $\mathbf{y}_{k}$ does not affect the current covariance matrix update $\bm{\Sigma}_{k}$, which is computed recursively from the previous state estimate $\hat{\mathbf{x}}_{k-1}$ through the weighted average covariances $\bm{\Sigma}_{k|k-1}$, $\bm{\Sigma}^{y}_{k}$ and $\bm{\Sigma}^{xy}_{k}$. In our inverse filter formulation, we treat $\bm{\Sigma}_{k}$ as another exogenous input of \eqref{eqn:ICKF state transition} and approximate it as $\bm{\Sigma}^{*}_{k}$ computed recursively using I-CKF's previous state estimate $\doublehat{\mathbf{x}}_{k-1}$ in the same manner as the forward CKF computes $\bm{\Sigma}_{k}$ using its estimate $\hat{\mathbf{x}}_{k-1}$.

We augment the state estimate $\hat{\mathbf{x}}_{k}$ with the non-additive noise term $\mathbf{v}_{k+1}$ and consider state $\mathbf{z}_{k}=[\hat{\mathbf{x}}_{k}^{T},\mathbf{v}_{k+1}^{T}]^{T}$ of dimension $n_{z}=n_{x}+n_{y}$. The state transition \eqref{eqn:ICKF state transition} in terms of $\mathbf{z}_{k}$ becomes $\hat{\mathbf{x}}_{k+1}=\widetilde{f}(\mathbf{z}_{k},\bm{\Sigma}_{k},\mathbf{x}_{k+1})$. Denote $\overline{\bm{\xi}}_{j}$ as the $j$-th element of $2n_{z}$-points set\\
\small
$\{\sqrt{n_{z}}\overline{\mathbf{e}}_{1},\sqrt{n_{z}}\overline{\mathbf{e}}_{2},\hdots,\sqrt{n_{z}}\overline{\mathbf{e}}_{n_{z}},-\sqrt{n_{z}}\overline{\mathbf{e}}_{1},-\sqrt{n_{z}}\overline{\mathbf{e}}_{2},\hdots,-\sqrt{n_{z}}\overline{\mathbf{e}}_{n_{z}}\}$
\normalsize
where $\overline{\mathbf{e}}_{j}$ is the $j$-th standard basis vector in $\mathbb{R}^{n_{z}\times 1}$. Define
\par\noindent\small
\begin{align}
    \hat{\mathbf{z}}_{k}=[\doublehat{\mathbf{x}}_{k}^{T},\mathbf{0}_{1\times n_{y}}]^{T},\;\;\overline{\bm{\Sigma}}^{z}_{k}=\begin{bsmallmatrix}\overline{\bm{\Sigma}}_{k}&\mathbf{0}_{n_{x}\times n_{y}}\\\mathbf{0}_{n_{y}\times n_{x}}&\mathbf{R}\end{bsmallmatrix}.\label{eqn:ICKF z hat and sigma z}
\end{align}
\normalsize
The I-CKF recursions to compute state estimate $\doublehat{\mathbf{x}}_{k}$ and associated error covariance matrix $\overline{\bm{\Sigma}}_{k}$ using observations \eqref{eqn:observation a} are
\par\noindent\small
\begin{align}
    &\textit{Time update:}\;\;\;\overline{\mathbf{s}}_{j,k}=\hat{\mathbf{z}}_{k}+\sqrt{\overline{\bm{\Sigma}}^{z}_{k}}\overline{\bm{\xi}}_{j}\;\;\;\forall\;\; i=1,2,\hdots,2n_{z}\\
    &\overline{s}^{*}_{j,k+1|k}=\widetilde{f}(\overline{\mathbf{s}}_{j,k},\bm{\Sigma}_{k}^{*},\mathbf{x}_{k+1})\;\;\;\forall\;\; j=1,2,\hdots,2n_{z},\label{eqn:ICKF f propagate}\\
    &\doublehat{\mathbf{x}}_{k+1|k}=\sum_{j=1}^{2n_{z}}\overline{\omega}_{j}\overline{\mathbf{s}}^{*}_{j,k+1|k},\label{eqn:ICKF x predict}\\
    &\overline{\bm{\Sigma}}_{k+1|k}=\sum_{j=1}^{2n_{z}}\overline{\omega}_{j}\overline{\mathbf{s}}^{*}_{j,k+1|k}(\overline{\mathbf{s}}^{*}_{j,k+1|k})^{T}-\doublehat{\mathbf{x}}_{k+1|k}\doublehat{\mathbf{x}}_{k+1|k}^{T},\label{eqn:ICKF sig predict}\\
    &\textit{Measurement update:}\;\mathbf{a}^{*}_{j,k+1|k}=g(\overline{\mathbf{s}}^{*}_{j,k+1|k})\;\;\;\forall\;\; j=1,\hdots,2n_{z},\label{eqn:ICKF g propagate}\\
    &\hat{\mathbf{a}}_{k+1|k}=\sum_{j=1}^{2n_{z}}\overline{\omega}_{j}\mathbf{a}^{*}_{j,k+1|k},\label{eqn:ICKF a predict}\\
    &\overline{\bm{\Sigma}}^{a}_{k+1}=\sum_{j=1}^{2n_{z}}\overline{\omega}_{j}\mathbf{a}^{*}_{j,k+1|k}(\mathbf{a}^{*}_{j,k+1|k})^{T}-\hat{\mathbf{a}}_{k+1|k}\hat{\mathbf{a}}_{k+1|k}^{T}+\bm{\Sigma}_{\epsilon},\label{eqn:ICKF sig a}\\
    &\overline{\bm{\Sigma}}^{xa}_{k+1}=\sum_{j=1}^{2n_{z}}\overline{\omega}_{j}\overline{\mathbf{s}}^{*}_{j,k+1|k}(\mathbf{a}^{*}_{j,k+1|k})^{T}-\doublehat{\mathbf{x}}_{k+1|k}\hat{\mathbf{a}}_{k+1|k}^{T},\label{eqn:ICKF cross cov}\\
    &\doublehat{x}_{k+1}=\doublehat{x}_{k+1|k}+\overline{\mathbf{K}}_{k+1}(\mathbf{a}_{k+1}-\hat{\mathbf{a}}_{k+1|k}),\label{eqn:ICKF state update}\\
    &\overline{\bm{\Sigma}}_{k+1}=\overline{\bm{\Sigma}}_{k+1|k}-\overline{\mathbf{K}}_{k+1}\overline{\bm{\Sigma}}^{a}_{k+1}\overline{\mathbf{K}}_{k}^{T}.\label{eqn:ICKF covariance update}
\end{align}
\normalsize
with each weight $\overline{\omega}_{j}=1/2n_{z}$ and gain matrix $\overline{\mathbf{K}}_{k+1}=\overline{\bm{\Sigma}}^{xa}_{k+1}\left(\overline{\bm{\Sigma}}^{a}_{k+1}\right)^{-1}$. The I-CKF recursions follow from the non-additive noise formulation of CKF \cite{wang2017augmentedckf} with a higher $(n_{x}+n_{y})$-dimensional cubature points. However, unlike forward CKF, these cubature points are generated only once for the time update, taking into account the process noise statistics (covariance $\mathbf{R}$). Note that under the Gaussian posterior assumption, $\doublehat{\mathbf{x}}_{k}$ and $\bm{\Sigma}^{*}_{k}$, respectively, provide an estimate of forward filter's $\hat{\mathbf{x}}_{k}$ and $\bm{\Sigma}_{k}$, i.e., the mean and covariance of the assumed Gaussian density.

\begin{remark}[Differences with I-KF and I-EKF]
The forward gain matrix $\mathbf{K}_{k+1}$ in the case of I-KF is deterministic \cite{krishnamurthy2019how}. For I-EKF \cite{singh2022inverse}, this matrix depends on only the linearized system model at the state estimates. Hence, I-KF and I-EKF treat $\mathbf{K}_{k+1}$ as a time-varying parameter of the inverse filter's state transition. However, the gain matrix in CKF explicitly depends on the state estimates through the covariances computed from the generated cubature points. Thus, it is not treated as a parameter of \eqref{eqn:ICKF state transition detail}. 
\end{remark}
\begin{remark}[Differences with I-UKF]
Contrary to CKF, UKF generates a set of `$2n_{x}+1$' sigma points around the previous state estimate, including one at the previous estimate itself, with their spread and weights controlled by parameter $\kappa$. The I-UKF\cite{singh2023iukf} also assumes a known forward filter's $\kappa$. 
The I-CKF, however, does not require any such parameter information. Note that forward UKF with $\kappa$ set to $0$ results in CKF as the attacker's forward filter. In that case, I-UKF with I-UKF's control parameter $\overline{\kappa}=0$ reduces to I-CKF.
\end{remark}
\begin{remark}[Computational complexity]\label{remark:CKF complexity}
    Note that I-CKF's recursions are obtained from a general CKF algorithm and, hence, have similar computational complexity. CKF recursions have an approximate complexity of $\mathcal{O}(d^{3})$ where $d$ is the dimension of the estimated state vector\cite{daum2005nonlinear}. However, I-CKF estimates an $n_{z}=n_{x}+n_{y}$ dimensional state such that the actual complexity depends on the dimensions of both the defender's state and the attacker's observation.
\end{remark}
\subsubsection{Non-Gaussian systems}\label{subsubsec:non-gaussian}
   CKF and, hence, I-CKF assume Gaussian process and measurement noises. Recently, KFs modified using the correntropy and its mixture criteria have been proposed to handle non-Gaussian noises \cite{izanloo2016kalman,wang2017maximum,wang2020outlier}. Our I-CKF is trivially modified using these criteria for non-Gaussian system models. For instance, forward mixture correntropy-based CKF proposed in \cite{wang2020outlier} considers a time-varying noise covariance $\mathbf{R}_{k}$ instead of $\mathbf{R}$ and then iteratively modifies the gain matrix. In particular, \cite{wang2020outlier} introduces a diagonal matrix $\bm{\Lambda}_{k}$ with $i$-th diagonal element $[\bm{\Lambda}_{k}]_{i,i}=(\lambda/n_{y})(\alpha G_{\sigma_{1}}([\mathbf{e}_{k}]_{i})/\sigma_{1}^{2}+(1-\alpha)G_{\sigma_{2}}([\mathbf{e}_{k}]_{i})/\sigma_{2}^{2})$ where $\mathbf{e}_{k}=\mathbf{R}_{k}^{-1/2}(\mathbf{y}_{k}-h(\mathbf{x}_{k}))$; $\alpha$ and $\lambda$ are parameters of the mixture correntropy; and $G_{\sigma}(\cdot)$ denotes a Gaussian kernel with parameter $\sigma$. The gain matrix is then obtained by replacing $\mathbf{R}_{k}$ by $(\mathbf{R}_{k}^{1/2})^{T}\bm{\Lambda}_{k}^{-1}\mathbf{R}_{k}^{1/2}$ in the measurement update of the forward CKF algorithm while all other state prediction and update steps remain same. These modifications need to be taken into account in the I-CKF's state transition equation while formulating the inverse filter. I-CKF's gain matrix is also similarly modified by introducing diagonal matrix $\overline{\bm{\Lambda}}_{k}$ which is the counterpart of $\bm{\Lambda}_{k}$ for the inverse filter's dynamics.
\subsection{Inverse QKF}
\label{subsec:IQKF}
Define $\mathbf{M}$ as the `$m\times m$' symmetric tridiagonal matrix with zero diagonal elements such that $[\mathbf{M}]_{(i,i+1)}=\sqrt{i/2}$ for all $1\leq i\leq m$. For the one-dimensional case, the $i$-th quadrature point of the $m$-point quadrature rule is $\zeta_{i}=\sqrt{2}\beta_{i}$ where $\beta_{i}$ is the $i$-th eigenvalue of $\mathbf{M}$. The corresponding weight $\omega_{i}=[\bm{\nu_{i}}]_{1}^{2}$, where $[\bm{\nu}_{i}]_{1}$ is the first element of the $i$-th normalized eigenvector of $\mathbf{M}$. A $m$-point quadrature rule computes the mean exactly for polynomials of order less than or equal to $(2m-1)$, and the covariance is exact for polynomials of degree less than $m$ \cite{arasaratnam2007qkf}. In a $n_{x}$-dimensional state space, the $m$-point (per-axis) QKF considers $m^{n_{x}}$ quadrature points obtained by extending the scalar quadrature points as $\bm{\zeta}_{i}=[\zeta_{i_{1}}, \zeta_{i_{2}},\hdots,\zeta_{i_{n_{x}}}]^{T}$ and $\omega_{i}=\prod_{j=1}^{n_{x}}\omega_{i_{j}}$ where $\lbrace\zeta_{i_{j}},\omega_{i_{j}}\rbrace_{1\leq i_{j}\leq m}$ are the $m$ scalar quadrature points corresponding to the $j$-th dimension. In I-QKF, we assume a forward QKF to estimate the defender's state. 

Consider the state transition \eqref{eqn:state transition x} and observations \eqref{eqn:observation y} of the forward filter. Then, \textit{ceteris paribus}, using these quadrature points in place of cubature points in the forward CKF, we obtain the $m$-point forward QKF. 
The corresponding time and measurement updates become
\par\noindent\small
\begin{align*}  \mathbf{s}_{i,k}&=\hat{\mathbf{x}}_{k}+\sqrt{\bm{\Sigma}_{k}}\bm{\zeta}_{i}\;\;\;\forall\;\; i=1,2,\hdots,m^{n_{x}},\\
    \mathbf{q}_{i,k+1|k}&=\hat{\mathbf{x}}_{k+1|k}+\sqrt{\bm{\Sigma}_{k+1|k}}\bm{\zeta}_{i}\;\;\;\forall\;\; i=1,2,\hdots,m^{n_{x}}.
\end{align*}
\normalsize

Assuming the parameter $m$ of the forward QKF to be known, the I-QKF's state transition is
\par\noindent\small
\begin{align*}
    \hat{\mathbf{x}}_{k+1}&=\sum_{i=1}^{m^{n_{x}}}\omega_{i}\left(\mathbf{s}^{*}_{i,k+1|k}-\mathbf{K}_{k+1}\mathbf{q}^{*}_{i,k+1|k}\right)+\mathbf{K}_{k+1}h(\mathbf{x}_{k+1})+\mathbf{K}_{k+1}\mathbf{v}_{k+1}.
\end{align*}
\normalsize
In terms of the augmented state $\mathbf{z}_{k}=[\hat{\mathbf{x}}_{k}^{T},\mathbf{v}_{k+1}^{T}]^{T}$, state transition is $\hat{\mathbf{x}}_{k+1}=\widetilde{f}(\mathbf{z}_{k},\bm{\Sigma}_{k},\mathbf{x}_{k+1})$. Approximate $\bm{\Sigma}_{k}$ by $\bm{\Sigma}^{*}_{k}$ (evaluated similarly as in I-CKF). Denote $\hat{\mathbf{z}}_{k}$ and $\overline{\bm{\Sigma}}^{z}_{k}$ as in \eqref{eqn:ICKF z hat and sigma z}. For the $\overline{m}$-point I-QKF, we denote the quadrature points in the $n_{z}$-dimensional state space by $\lbrace\overline{\bm{\zeta}}_{j},\overline{\omega}_{j}\rbrace_{1\leq j\leq \overline{m}^{n_{z}}}$ such that I-QKF generates a set of $\overline{m}^{n_{z}}$ quadrature points as
\par\noindent\small
\begin{align*}
    \overline{\mathbf{s}}_{j,k}=\hat{\mathbf{z}}_{k}+\sqrt{\overline{\bm{\Sigma}}^{z}_{k}}\overline{\bm{\zeta}}_{j},
\end{align*}
\normalsize
for all $j=1,2,\hdots,\overline{m}^{n_{z}}$. The I-QKF's recursions then follow the time and measurement update procedure in \eqref{eqn:ICKF f propagate}-\eqref{eqn:ICKF covariance update}. Note that the choice of I-QKF's $\overline{m}$ is independent of any assumption about the forward QKF's parameter $m$.

I-QKF recursions also similarly follow from the non-additive noise formulation of QKF\cite{arasaratnam2007qkf}. Analogous to I-CKF, I-QKF also generates only one set of quadrature points per recursion, but the relative increase in the state dimension from $n_{x}$ to $n_{z}$ is more significant in the latter. The QKF and, hence, I-QKF suffer from the curse of dimensionality because the number of quadrature points increases exponentially with the state-space dimension. On the contrary, the cubature/sigma points in CKF/UKF scale up linearly. However, the expensive computations required to estimate $\{\bm{\zeta}_{i},\omega_{i}\}_{1\leq i\leq m^{n_{x}}}$ ($\{\overline{\bm{\zeta}}_{j},\overline{\omega}_{j}\}_{1\leq j\leq\overline{m}^{n_{z}}}$) in forward QKF (I-QKF) are performed offline \cite{arasaratnam2007qkf}. Further, \cite{closas2012multiple,closas2015computational} suggest methods for QKF's complexity reduction.
\begin{remark}[Simplification to I-UKF]\label{rmk:qkf}
For the one-dimensional state space ($n_{x}=1$), the forward $3$-point QKF coincides with the forward UKF with $\kappa=2$ \cite{ito2000gaussian}. In this case, I-QKF with $\overline{m}=3$ also coincides with I-UKF with $\overline{\kappa}=2$.
\end{remark}

\subsection{Inverse CQKF}
\label{subsec:I-CQKF}
Denote $\mathbf{e}_{i}$ as the $i$-th standard basis vector in $\mathbb{R}^{n_{x}\times 1}$ and define $2n_{x}$-points set $\{\mathbf{u}_{i'}\}_{1\leq i'\leq 2n_{x}}=\{\mathbf{e}_{1},\mathbf{e}_{2},\hdots,\mathbf{e}_{n_{x}},-\mathbf{e}_{1},\hdots,-\mathbf{e}_{n_{x}}\}$. The $m$-th order CQKF generates $2mn_{x}$ CQ points for $n_{x}$-dimensional state space. Algorithm~\ref{alg:CQ points} provides the procedure to compute the CQ points $\{\bm{\xi}_{i}\}_{1\leq i\leq 2mn_{x}}$ and their corresponding weights $\{\omega_{i}\}_{1\leq i\leq 2mn_{x}}$.
\begin{algorithm}
	\caption{Cubature-quadrature points generation}
	\label{alg:CQ points}
    \begin{algorithmic}[1]
    \Statex \textbf{Input:} $n_{x}$, $m$
    \Statex \textbf{Output:} $\{\bm{\xi}_{i},\omega_{i}\}_{1\leq i\leq 2mn_{x}}$
\State Compute the roots $\{\lambda_{j}\}_{1\leq j\leq m}$ of $m$-th order Chebyshev-Laguerre polynomial $L^{\beta}_{m}(\lambda)$ for $\beta=(n_{x}/2)-1$ given by
\begin{align*}
    L^{\beta}_{m}(\lambda)&=\lambda^{m}-\frac{m}{1!}(m+\beta)\lambda^{m-1}+\frac{m(m-1)}{2!}(m+\beta)(m+\beta-1)\lambda^{m-2}-\hdots,
\end{align*}
while $(\cdot)!$ denotes factorial.
\For{$i'\gets 1$ to $2n_{x}$}
\For{$j\gets 1$ to $m$}
        \State $\bm{\xi}_{m(i'-1)+j}=\sqrt{2\lambda_{j}}\mathbf{u}_{i'}$.
        \State $\omega_{m(i'-1)+j}=\frac{m!\Gamma(\beta+m+1)}{2n_{x}\Gamma(n_{x}/2)\lambda_{j}(\partial L^{\beta}_{m}/\partial\lambda_{j})^{2}}$ where $\Gamma(\cdot)$ is the Gamma function.
    \EndFor
\EndFor
\Statex \Return $\{\bm{\xi}_{i},\omega_{i}\}_{1\leq i\leq 2mn_{x}}$.
    \end{algorithmic}
\end{algorithm}

Then, \textit{ceteris paribus}, considering the CQ points in place of cubature points in the forward CKF, we obtain the forward CQKF with the following time and measurement updates:
\par\noindent\small
\begin{align*}  
    \mathbf{s}_{i,k}&=\hat{\mathbf{x}}_{k}+\sqrt{\bm{\Sigma}_{k}}\bm{\xi}_{i}\;\;\;\forall\;\; i=1,2,\hdots,2mn_{x},\\
    \mathbf{q}_{i,k+1|k}&=\hat{\mathbf{x}}_{k+1|k}+\sqrt{\bm{\Sigma}_{k+1|k}}\bm{\xi}_{i}\;\;\;\forall\;\; i=1,2,\hdots,2mn_{x}.
\end{align*}
\normalsize
Again, assuming the parameter $m$ of the forward CQKF to be known, the I-CQKF's state transition is
\par\noindent\small
\begin{align*}
    \hat{\mathbf{x}}_{k+1}&=\sum_{i=1}^{2mn_{x}}\omega_{i}\left(\mathbf{s}^{*}_{i,k+1|k}-\mathbf{K}_{k+1}\mathbf{q}^{*}_{i,k+1|k}\right)+\mathbf{K}_{k+1}h(\mathbf{x}_{k+1})+\mathbf{K}_{k+1}\mathbf{v}_{k+1},
\end{align*}
\normalsize
which in terms of the augmented state $\mathbf{z}_{k}=[\hat{\mathbf{x}}_{k}^{T},\mathbf{v}_{k+1}^{T}]^{T}$ becomes $\hat{\mathbf{x}}_{k+1}=\widetilde{f}(\mathbf{z}_{k},\bm{\Sigma}_{k},\mathbf{x}_{k+1})$. For the $\overline{m}$-point I-CQKF, we denote the CQ points in the $n_{z}$-dimensional state space by $\lbrace\overline{\bm{\xi}}_{j},\overline{\omega}_{j}\rbrace_{1\leq j\leq 2\overline{m}n_{z}}$ such that I-CQKF generates a set of $2\overline{m}n_{z}$ CQ points as
\par\noindent\small
\begin{align*}
    \overline{\mathbf{s}}_{j,k}=\hat{\mathbf{z}}_{k}+\sqrt{\overline{\bm{\Sigma}}^{z}_{k}}\overline{\bm{\xi}}_{j},
\end{align*}
\normalsize
for all $j=1,2,\hdots,2\overline{m}n_{z}$ with $\hat{\mathbf{z}}_{k}$ and $\overline{\bm{\Sigma}}^{z}_{k}$ as defined in \eqref{eqn:ICKF z hat and sigma z}. The I-CQKF's recursions then follow the time and measurement update procedure in \eqref{eqn:ICKF f propagate}-\eqref{eqn:ICKF covariance update}.

Note that an $m$-th order CQKF considers $2n_{x}m$ CQ points for an $n_{x}$-dimensional state-space. Hence, contrary to QKF, the number of CQ points increases linearly with the state-space dimension. However, computationally, CQKF is still more expensive than CKF. The same argument holds for I-CQKF. Again, the CQ points and their weights $\{\bm{\xi}_{i},\omega_{i}\}$ ($\{\overline{\bm{\xi}}_{i},\overline{\omega}_{i}\}$) in forward CQKF (I-CQKF) are generated offline following Algorithm~\ref{alg:CQ points}.
\subsubsection{Continuous-discrete framework}\label{subsubsec:continuous-discrete}
    So far, we have considered discrete-time state-evolution and observation models. However, in many practical applications like radar tracking \cite{li2003survey,jazwinski2007stochastic} and chemical systems' estimation \cite{kulikov2019numerical,kulikova2023derivative}, the state evolution is inherently a continuous-time process while the observations are obtained at discrete-time instants. In the forward filtering case, state estimation for such systems is efficiently handled through the continuous-discrete Kalman-Bucy filter \cite{kalman1961new} and its non-linear extensions \cite{jazwinski2007stochastic,kulikov2021square,kulikov2022overall,kulikov2022universal}. 

If the defender observes the attacker's actions (as $\mathbf{a}_{k}$) and estimates $\hat{\mathbf{x}}_{k}$ at the same discrete-time instants only, the inverse filtering problem remains a discrete-time problem even when the state evolution is modeled as a continuous-time process. In this case, our developed inverse filters are trivially modified to handle the continuous-time state evolution process. In particular, the forward continuous-discrete filter \cite{kulikov2022universal,sarkka2007unscented} computes the predicted state $\hat{\mathbf{x}}_{k+1|k}$ and its associated covariance estimate $\bm{\Sigma}_{k+1|k}$ by numerically integrating a pair of differential equations (see, e.g., \cite[eq.~(34)]{sarkka2007unscented}) with the previous estimates $\hat{\mathbf{x}}_{k}$ and $\bm{\Sigma}_{k}$ as the initial conditions. The measurement update steps remain the same as the discrete-time filter. These differences need to be taken into account while formulating the inverse filter. 

For instance, if $\hat{\mathbf{x}}_{k+1|k}=\chi_{1}(\hat{\mathbf{x}}_{k})$ and $\bm{\Sigma}_{k+1|k}=\chi_{2}(\hat{\mathbf{x}}_{k},\bm{\Sigma}_{k})$ denote the forward filter's prediction steps (solutions of differential equations), then the state transition \eqref{eqn:ICKF state transition detail} changes to
    \par\noindent\small
    \begin{align}
        \hat{\mathbf{x}}_{k+1}&=\chi_{1}(\hat{\mathbf{x}}_{1})-\sum_{i=1}^{2n_{x}}\omega_{i}\mathbf{K}_{k+1}\mathbf{q}^{*}_{i,k+1|k}+\mathbf{K}_{k+1}h(\mathbf{x}_{k+1})+\mathbf{K}_{k+1}\mathbf{v}_{k+1}.\label{eqn:state transition continuous discrete}
    \end{align}
    \normalsize
    Again, the propagated points $\{\mathbf{q}^{*}_{i,k+1|k}\}$ are deterministic functions of predicted state $\hat{\mathbf{x}}_{k+1|k}$ and covariance estimate $\bm{\Sigma}_{k+1|k}$, which in turn are obtained from $\hat{\mathbf{x}}_{k}$ and $\bm{\Sigma}_{k}$ as solutions $\chi_{1}(\cdot)$ and $\chi_{2}(\cdot)$. Hence, state transition \eqref{eqn:state transition continuous discrete} becomes $\hat{\mathbf{x}}_{k+1}=\widetilde{f}(\hat{\mathbf{x}}_{k},\bm{\Sigma}_{k},\mathbf{x}_{k+1},\mathbf{v}_{k+1})$ but with $\widetilde{f}(\cdot)$ representing the modified state transition function.

\subsubsection{Complex-valued systems}\label{subsubsec:complex}
The system model described by equations \eqref{eqn:state transition x}-\eqref{eqn:observation a} pertains to scenarios involving real-valued states and observations. However, certain applications, such as frequency estimation and neural network training, necessitate the consideration of complex-valued systems, often employing complex KFs \cite{dini2011widely,dini2012class}. In recent developments, widely linear processing \cite{dini2011widely,dini2012class,zhang2022unscented} has emerged as a technique that leverages both covariance and pseudo-covariance matrices information. In the realm of complex-valued inverse filtering problems, our proposed inverse filters can be suitably adapted to yield their widely linear complex counterparts. For instance, the forward complex filter in \cite{dini2011widely} considered an augmented state $\bm{\xi}_{k}\doteq[\mathbf{x}_{k}^{T},\mathbf{x}^{H}_{k}]^{T}$ with covariance matrix $\bm{\Sigma}^{\xi}_{k}\doteq\begin{bsmallmatrix}
    \bm{\Sigma}_{k}&\bm{\Sigma}^{p}_{k}\\
    (\bm{\Sigma}^{p}_{k})^{H}&\bm{\Sigma}_{k}^{H}
\end{bsmallmatrix}$. Here, $\bm{\Sigma}_{k}$ and $\bm{\Sigma}^{p}_{k}$, respectively, denote the covariance and pseudo-covariance matrices of state $\mathbf{x}_{k}\in\mathbb{C}^{n_{x}\times 1}$. The forward complex CKF can then estimate $\bm{\xi}_{k}$ following the standard CKF recursions with $(\cdot)^{T}$ replaced by $(\cdot)^{H}$. Note that the cubature points are then generated using estimate $\hat{\bm{\xi}}_{k}$ (of $\bm{\xi}_{k}$) and $\bm{\Sigma}^{\xi}_{k}$ which includes pseudo-covariance $\bm{\Sigma}^{p}_{k}$. While formulating the inverse filter, we need to consider the forward filter's augmented state $\bm{\xi}_{k}$ and accordingly modify the state transition \eqref{eqn:ICKF state transition detail}. Finally, \textit{mutatis mutandis}, the general complex I-CKF's recursions stem from the augmented-state complex CKF adopting the I-CKF's modified state transition as the state evolution process and $\mathbf{a}_{k}$ as observations. Note that the augmented-state complex CKF recursions trivially follow from the complex UKF introduced in [44] by setting parameter $\kappa=0$.

\subsection{RKHS-CKF}\label{subsec:RKHS}
The inverse filters developed so far assumed perfect system information on both the attacker's and defender's sides. However, in practice, the agent (attacker and/or defender) employing the stochastic filter may lack information about state evolution, observation, or both. Hence, we consider a general unknown non-linear system model and develop RKHS-CKF to jointly estimate the desired state and learn the unknown system parameters. In particular, RKHS-CKF follows from our RKHS-UKF developed in \cite{singh2023iukf} by replacing the sigma points obtained from the unscented transform with suitable cubature points. The same algorithm is trivially extended to obtain RKHS-QKF/CQKF using quadrature/ cubature-quadrature rules, and hence, omitted here. Note that RKHS-CKF learns the state transition on its own and, hence, employed as both the attacker's forward filter and the defender's inverse filter without any prior forward filter information. Overall, RKHS-CKF couples CKF's state estimation with an approximate online expectation maximization (EM) algorithm to learn the unknown system parameters. We refer the readers to \cite{singh2022inverse_part2,singh2023iukf} for further details on the EM-based parameter learning steps of RKHS-based filters.

Consider the non-linear state transition \eqref{eqn:state transition x} and observation \eqref{eqn:observation y} but with the functions $f(\cdot)$ and $h(\cdot)$, and the noise covariances $\mathbf{Q}$ and $\mathbf{R}$ being unknown. Considering a kernel function $K(\cdot,\cdot)$ and a dictionary $\{\widetilde{\mathbf{x}}_{l}\}_{1\leq l\leq L}$ of size $L$, define $\bm{\Phi}(\mathbf{x})=[K(\widetilde{\mathbf{x}}_{1},\mathbf{x}),\hdots,K(\widetilde{\mathbf{x}}_{L},\mathbf{x})]^{T}$. From the representer theorem \cite{scholkopf2001generalized}, the unknown state transition and observation are approximated using the kernel, respectively, as
\par\noindent\small
\begin{align*}
    &\mathbf{x}_{k+1}=\mathbf{A}\bm{\Phi}(\mathbf{x}_{k})+\mathbf{w}_{k},\\
    &\mathbf{y}_{k}=\mathbf{B}\bm{\Phi}(\mathbf{x}_{k})+\mathbf{v}_{k},
\end{align*}
\normalsize
where $\mathbf{A}\in\mathbb{R}^{n_{x}\times L}$ and $\mathbf{B}\in\mathbb{R}^{n_{y}\times L}$ are the unknown coefficient matrices to be learnt. The dictionary $\{\widetilde{\mathbf{x}}_{l}\}_{1\leq l\leq L}$ may be formed using a sliding window \cite{van2006sliding} or approximate linear dependency (ALD) \cite{engel2004kernel} criterion. Define augmented state $\mathbf{z}_{k}=[\mathbf{x}_{k}^{T}\;\mathbf{x}_{k-1}^{T}]^{T}$ (dimension $n_{z}=2n_{x}$) such that the kernel approximated system model becomes
\par\noindent\small
\begin{align*}
    &\mathbf{z}_{k}=\widetilde{f}(\mathbf{z}_{k-1})+\widetilde{\mathbf{w}}_{k-1},\\
    &\mathbf{y}_{k}=\widetilde{h}(\mathbf{z}_{k})+\mathbf{v}_{k},
\end{align*}
\normalsize
where $\widetilde{f}(\mathbf{z}_{k-1})=[(\mathbf{A}\bm{\Phi}(\mathbf{x}_{k-1}))^{T}\;\mathbf{x}_{k-1}^{T}]^{T}$ and $\widetilde{h}(\mathbf{z}_{k})=\mathbf{B}\bm{\Phi}(\mathbf{x}_{k})$ with process noise $\widetilde{\mathbf{w}}_{k-1}=[\mathbf{w}_{k-1}^{T}\;\mathbf{0}_{1\times n_{x}}]^{T}$ of noise covariance matrix $\widetilde{\mathbf{Q}}=\begin{bsmallmatrix}\mathbf{Q}&\mathbf{0}_{n_{x}\times n_{x}}\\\mathbf{0}_{n_{x}\times n_{x}}&\mathbf{0}_{n_{x}\times n_{x}}\end{bsmallmatrix}$. At the $k$-th time instant, RKHS-CKF computes the parameter estimates $\{\mathbf{A}_{k},\mathbf{B}_{k},\mathbf{Q}_{k},\mathbf{R}_{k}\}$ and the current state estimate $\hat{\mathbf{x}}_{k}$ given the observations $\{\mathbf{y}_{i}\}_{1\leq i\leq k}$ as in Algorithm~\ref{alg:RKHS-CKF recursion}. To this end, $\{\mathbf{s}_{i,k-1}, \omega_{i}\}_{1\leq i\leq 2n_{z}}\doteq S_{c}(\hat{\mathbf{z}}_{k-1},\bm{\Sigma}^{z}_{k-1})$ represents the cubature points and their weights generated from previous augmented state estimate $\hat{\mathbf{z}}_{k-1}$ and the associated covariance matrix $\bm{\Sigma}^{z}_{k-1}$.

\begin{algorithm}
	\caption{RKHS-CKF recursion}
	\label{alg:RKHS-CKF recursion}
    \begin{algorithmic}[1]
    \small
   \Statex \textbf{Input:} $\{\mathbf{s}_{i,k-1},\omega_{i}\}_{1\leq i\leq 2n_{z}}$, $\bm{\Sigma}^{z}_{k-1}$, $\hat{\mathbf{A}}_{k-1}$, $\hat{\mathbf{B}}_{k-1}$, $\hat{\mathbf{Q}}_{k-1}$, $\hat{\mathbf{R}}_{k-1}$, $\mathbf{S}^{x\phi}_{k-1}$, $\mathbf{S}^{\phi 1}_{k-1}$, $\mathbf{S}^{y\phi}_{k-1}$, $\mathbf{S}^{\phi}_{k-1}$, and $\mathbf{y}_{k}$
    \Statex \textbf{Output:} $\hat{\mathbf{x}}_{k}$, $\{\mathbf{s}_{i,k}\}_{1\leq i\leq 2n_{z}}$, $\bm{\Sigma}^{z}_{k}$, $\hat{\mathbf{A}}_{k}$, $\hat{\mathbf{B}}_{k}$, $\hat{\mathbf{Q}}_{k}$, $\hat{\mathbf{R}}_{k}$, $\mathbf{S}^{x\phi}_{k}$, $\mathbf{S}^{\phi 1}_{k}$, $\mathbf{S}^{y\phi}_{k}$, and $\mathbf{S}^{\phi}_{k}$
\State Propagate $\{\mathbf{s}_{i,k-1}\}$ as $\mathbf{s}^{*}_{i,k|k-1}=\widetilde{f}(\mathbf{s}_{i,k-1})$ for all $i=1,2,\hdots,2n_{z},$. Using $\mathbf{A}=\hat{\mathbf{A}}_{k-1}$ and $\widetilde{\mathbf{Q}}_{k-1}=\begin{bsmallmatrix}\hat{\mathbf{Q}}_{k-1}&\mathbf{0}_{n_{x}\times n_{x}}\\\mathbf{0}_{n_{x}\times n_{x}}&\mathbf{0}_{n_{x}\times n_{x}}\end{bsmallmatrix}$, compute $\hat{\mathbf{z}}_{k|k-1}=\sum_{i=1}^{2n_{z}}\omega_{i}\mathbf{s}^{*}_{i,k|k-1}$ and $\bm{\Sigma}^{z}_{k|k-1}=\sum_{i=1}^{2n_{z}}\omega_{i}\mathbf{s}^{*}_{i,k|k-1}(\mathbf{s}^{*}_{i,k|k-1})^{T}-\hat{\mathbf{z}}_{k|k-1}\hat{\mathbf{z}}_{k|k-1}^{T}+\widetilde{\mathbf{Q}}_{k-1}$.
\State Generate cubature points $\{\mathbf{q}_{i,k|k-1}\}_{1\leq i\leq 2n_{z}}=S_{c}(\hat{\mathbf{z}}_{k|k-1},\bm{\Sigma}^{z}_{k|k-1})$ and propagate as $\mathbf{q}^{*}_{i,k|k-1}=\widetilde{h}(\mathbf{q}_{i,k|k-1})$ for all $i=1,2,\hdots,2n_{z}$ using $\mathbf{B}=\hat{\mathbf{B}}_{k-1}$.
\State Using $\mathbf{R}=\hat{\mathbf{R}}_{k-1}$, compute predicted observation $\hat{\mathbf{y}}_{k|k-1}=\sum_{i=1}^{2n_{z}}\omega_{i}\mathbf{q}^{*}_{i,k|k-1}$ and the covariance estimates $\bm{\Sigma}^{y}_{k}=\sum_{i=1}^{2n_{z}}\omega_{i}\mathbf{q}^{*}_{i,k|k-1}(\mathbf{q}^{*}_{i,k|k-1})^{T}-\hat{\mathbf{y}}_{k|k-1}\hat{\mathbf{y}}_{k|k-1}^{T}+\mathbf{R}$ and $\bm{\Sigma}^{zy}_{k}=\sum_{i=1}^{2n_{z}}\omega_{i}\mathbf{q}_{i,k|k-1}(\mathbf{q}^{*}_{i,k|k-1})^{T}-\hat{\mathbf{z}}_{k|k-1}\hat{\mathbf{y}}_{k|k-1}^{T}$.
\State Compute $\hat{\mathbf{z}}_{k}=\hat{\mathbf{z}}_{k|k-1}+\mathbf{K}_{k}(\mathbf{y}_{k}-\hat{\mathbf{y}}_{k|k-1})$ and $\bm{\Sigma}^{z}_{k}=\bm{\Sigma}^{z}_{k|k-1}-\mathbf{K}_{k}\bm{\Sigma}^{y}_{k}\mathbf{K}_{k}^{T}$ where gain matrix $\mathbf{K}_{k}=\bm{\Sigma}^{zy}_{k}(\bm{\Sigma}^{y}_{k})^{-1}$.
\State $\hat{\mathbf{x}}_{k}\gets[\hat{\mathbf{z}}_{k}]_{1:n_{x}}$.
\State Generate cubature points $\{\mathbf{s}_{i,k}\}_{1\leq i\leq 2n_{z}}=S_{c}(\hat{\mathbf{z}}_{k},\bm{\Sigma}^{z}_{k})$.
\State Set $\mathbf{s}_{i,k}^{(1)}=[\mathbf{s}_{i,k}]_{1:n_{x}}$ and $\mathbf{s}_{i,k}^{(2)}=[\mathbf{s}_{i,k}]_{n_{x}+1:2n_{x}}$.
\State Compute $\widetilde{\mathbf{s}}_{i,k}^{(1)}=\bm{\Phi}(\mathbf{s}_{i,k}^{(1)})$ and $\widetilde{\mathbf{s}}_{i,k}^{(2)}=\bm{\Phi}(\mathbf{s}_{i,k}^{(2)})$.
\State Approximate the expectations as
\par\noindent\small
\begin{align*}
    &\mathbb{E}[\mathbf{x}_{k}\mathbf{x}_{k}^{T}]\approx[\bm{\Sigma}^{z}_{k}]_{(1:n_{x},1:n_{x})}+\hat{\mathbf{x}}_{k}\hat{\mathbf{x}}_{k}^{T},\;\;\;\mathbb{E}[\bm{\Phi}(\mathbf{x}_{k-1})\bm{\Phi}(\mathbf{x}_{k-1})^{T}]\approx\sum_{i=1}^{2n_{z}}\omega_{i}\widetilde{\mathbf{s}}_{i,k}^{(2)}(\widetilde{\mathbf{s}}_{i,k}^{(2)})^{T},\\
    &\mathbb{E}[\mathbf{x}_{k}\bm{\Phi}(\mathbf{x}_{k-1})^{T}]\approx\sum_{i=1}^{2n_{z}}\omega_{i}\mathbf{s}_{i,k}^{(1)}(\widetilde{\mathbf{s}}_{i,k}^{(2)})^{T},\;\;\;\mathbb{E}[\bm{\Phi}(\mathbf{x}_{k})\bm{\Phi}(\mathbf{x}_{k})^{T}]\approx\sum_{i=1}^{2n_{z}}\omega_{i}\widetilde{\mathbf{s}}_{i,k}^{(1)}(\widetilde{\mathbf{s}}_{i,k}^{(1)})^{T}.
\end{align*}
\normalsize
\State Compute partial sums $\mathbf{S}^{x\phi}_{k}=\mathbf{S}^{x\phi}_{k-1}+\mathbb{E}[\mathbf{x}_{k}\bm{\Phi}(\mathbf{x}_{k-1})^{T}]$, $\mathbf{S}^{\phi 1}_{k}=\mathbf{S}^{\phi 1}_{k-1}+\mathbb{E}[\bm{\Phi}(\mathbf{x}_{k-1})\bm{\Phi}(\mathbf{x}_{k-1})^{T}]$, $\mathbf{S}^{y\phi}_{k}=\mathbf{S}^{y\phi}_{k-1}+\mathbb{E}[\mathbf{y}_{k}\bm{\Phi}(\mathbf{x}_{k})^{T}]$ and $\mathbf{S}^{\phi}_{k}=\mathbf{S}^{\phi}_{k-1}+\mathbb{E}[\bm{\Phi}(\mathbf{x}_{k})\bm{\Phi}(\mathbf{x}_{k})^{T}]$ where $\mathbb{E}[\mathbf{y}_{k}\bm{\Phi}(\mathbf{x}_{k})^{T}]=\hat{\mathbf{B}}_{k}\mathbb{E}[\bm{\Phi}(\mathbf{x}_{k})\bm{\Phi}(\mathbf{x}_{k})^{T}]$ and $\mathbb{E}[\mathbf{y}_{k}\mathbf{y}_{k}^{T}]=\hat{\mathbf{B}}_{k}\mathbb{E}[\bm{\Phi}(\mathbf{x}_{k})\bm{\Phi}(\mathbf{x}_{k})^{T}]\hat{\mathbf{B}}_{k}^{T}+\hat{\mathbf{R}}_{k-1}$.
\State Compute parameter estimates as
\par\noindent\small
\begin{align*}
   &\hat{\mathbf{A}}_{k}=\mathbf{S}^{x\phi}_{k}(\mathbf{S}^{\phi 1}_{k})^{-1},\\
   &\hat{\mathbf{B}}_{k}=\mathbf{S}^{y\phi}_{k}(\mathbf{S}^{\phi}_{k})^{-1},\\
   &\hat{\mathbf{Q}}_{k}=\left(1-\frac{1}{k}\right)\hat{\mathbf{Q}}_{k-1}+\frac{1}{k}(\mathbb{E}[\mathbf{x}_{k}\mathbf{x}_{k}^{T}]-\hat{\mathbf{A}}_{k}\mathbb{E}[\bm{\Phi}(\mathbf{x}_{k-1})\mathbf{x}_{k}^{T}]-\mathbb{E}[\mathbf{x}_{k}\bm{\Phi}(\mathbf{x}_{k-1})^{T}]\hat{\mathbf{A}}_{k}^{T}+\hat{\mathbf{A}}_{k}\mathbb{E}[\bm{\Phi}(\mathbf{x}_{k-1})\bm{\Phi}(\mathbf{x}_{k-1})^{T}]\hat{\mathbf{A}}_{k}^{T}),\\
   &\hat{\mathbf{R}}_{k}=\left(1-\frac{1}{k}\right)\hat{\mathbf{R}}_{k-1}+\frac{1}{k}(\mathbb{E}[\mathbf{y}_{k}\mathbf{y}_{k}^{T}]-\hat{\mathbf{B}}_{k}\mathbb{E}_{k}[\bm{\Phi}(\mathbf{x}_{k})\mathbf{y}_{k}^{T}]-\mathbb{E}[\mathbf{y}_{k}\bm{\Phi}(\mathbf{x}_{k})^{T}]\hat{\mathbf{B}}_{k}^{T}+\hat{\mathbf{B}}_{k}\mathbb{E}[\bm{\Phi}(\mathbf{x}_{k})\bm{\Phi}(\mathbf{x}_{k})^{T}]\hat{\mathbf{B}}_{k}^{T}).
\end{align*}
\normalsize
\State Update dictionary $\{\widetilde{\mathbf{x}}_{l}\}_{1\leq l\leq L}$ using the state estimate $\hat{\mathbf{x}}_{k}$ based on the sliding window \cite{van2006sliding} or ALD \cite{engel2004kernel} criterion.
\Statex \Return $\hat{\mathbf{x}}_{k}$, $\{\mathbf{s}_{i,k}\}_{1\leq i\leq 2n_{z}}$, $\bm{\Sigma}^{z}_{k}$, $\hat{\mathbf{A}}_{k}$, $\hat{\mathbf{B}}_{k}$, $\hat{\mathbf{Q}}_{k}$, $\hat{\mathbf{R}}_{k}$, $\mathbf{S}^{x\phi}_{k}$, $\mathbf{S}^{\phi 1}_{k}$, $\mathbf{S}^{y\phi}_{k}$, and $\mathbf{S}^{\phi}_{k}$.
\normalsize
    \end{algorithmic}    
\end{algorithm}

\section{Performance Analyses}
\label{sec:perfanalyses}
For the stability analysis, we adopt the unknown matrix approach introduced in \cite{xiong2006performance_ukf} for UKF with linear observations, wherein the linearization errors are modeled with unknown instrumental matrices. The unknown matrix approach has also been considered for CKF's stability in \cite{zarei2014convergence,wanasinghe2015stability}. However, \cite{zarei2014convergence} considered CKF with only linear observations and suggested modifications in CKF to enhance stability in the local asymptotic convergence sense. 
The stability conditions for CKF with non-linear measurements were derived in \cite{wanasinghe2015stability} using the exponential-mean-squared-boundedness sense. In the sequel, we provide improved stability results for the general forward CKF in the exponential-boundedness sense and then obtain the same for the I-CKF. Note that, analogous to the stability literature of KF, our Proposition~\ref{prop:forward CKF} and Theorem~\ref{thm:I-CKF} also provide sufficient but not necessary conditions for stability. Hence, these conditions are not verified for the example systems in Section~\ref{sec:numericals}. 
Later, we provide conditions for the consistency of filters' estimates.

\subsection{Stability}
\label{subsec:stability}
Following Remark~\ref{rmk:qkf}, the $3$-point forward QKF and $3$-point I-QKF for one-dimensional state-space coincide, respectively, with forward UKF with $\kappa=2$ and I-UKF with $\overline{\kappa}=2$. In this case, the sufficient conditions of \cite[Theorems~1 and 2]{singh2023iukf} also guarantee the stability of forward QKF and I-QKF, respectively. The general $\overline{m}$-point I-QKF and I-CQKF cases are omitted here. 
In the following, we consider the general time-varying process and measurement noise covariances $\mathbf{Q}_{k}$, $\mathbf{R}_{k}$ and $\overline{\mathbf{R}}_{k}$ instead of $\mathbf{Q}$, $\mathbf{R}$ and $\bm{\Sigma}_{\epsilon}$, respectively. Recall the definition of the exponential-mean-squared-boundedness.
\begin{definition}[Exponential mean-squared boundedness \cite{reif1999stochastic}] A stochastic process $\{\mathbf{b}_{k} \}_{k \geq 0}$ is defined to be exponentially bounded in the mean-squared sense if there are real numbers $\eta,\nu>0$ and $0<\lambda<1$ such that $\mathbb{E}\left[\|\mathbf{b}_{k}\|_{2}^{2}\right]\leq \eta\mathbb{E}\left[\|\mathbf{b}_{0}\|_{2}^{2}\right]\lambda^{k}+\nu$ holds for every $k\geq 0$.
\end{definition}

\textbf{Forward CKF:} Consider the forward CKF of Section~\ref{subsec:ICKF}. Define state prediction, state estimation and measurement prediction errors by $\widetilde{\mathbf{x}}_{k+1|k}\doteq\mathbf{x}_{k+1}-\hat{\mathbf{x}}_{k+1|k}$, $\widetilde{\mathbf{x}}_{k}\doteq\mathbf{x}_{k}-\hat{\mathbf{x}}_{k}$ and $\widetilde{\mathbf{y}}_{k+1}\doteq\mathbf{y}_{k+1}-\hat{\mathbf{y}}_{k+1|k}$, respectively. Following \cite{xiong2006performance_ukf,wanasinghe2015stability}, we represent the state and measurement prediction errors, respectively, as
\par\noindent\small
\begin{align}
    &\widetilde{\mathbf{x}}_{k+1|k}=\mathbf{U}^{x}_{k}\mathbf{F}_{k}\widetilde{\mathbf{x}}_{k}+\mathbf{w}_{k},\label{eqn:linearized x}\\
    &\widetilde{\mathbf{y}}_{k+1}=\mathbf{U}^{y}_{k+1}\mathbf{H}_{k+1}\widetilde{\mathbf{x}}_{k+1|k}+\mathbf{v}_{k+1},\label{eqn:linearized y}
\end{align}
\normalsize
where the unknown instrumental diagonal matrices $\mathbf{U}^{x}_{k}\in\mathbb{R}^{n_{x}\times n_{x}}$ and $\mathbf{U}^{y}_{k}\in\mathbb{R}^{n_{y}\times n_{y}}$ account for the linearization errors in $f(\cdot)$ and $h(\cdot)$, respectively. Here, $\mathbf{F}_{k}\doteq\frac{\partial f(\mathbf{x})}{\partial\mathbf{x}}\vert_{\mathbf{x}=\hat{\mathbf{x}}_{k}}$ and $\mathbf{H}_{k+1}\doteq\frac{\partial h(\mathbf{x})}{\partial\mathbf{x}}\vert_{\mathbf{x}=\hat{\mathbf{x}}_{k+1|k}}$. Finally, using \eqref{eqn:forward ckf x update}, we get $\widetilde{\mathbf{x}}_{k}=\widetilde{\mathbf{x}}_{k|k-1}-\mathbf{K}_{k}\widetilde{\mathbf{y}}_{k}$. Here, substituting \eqref{eqn:linearized x} and \eqref{eqn:linearized y} yields the error dynamics
\par\noindent\small
\begin{align}
    \widetilde{\mathbf{x}}_{k+1|k}=\mathbf{U}^{x}_{k}\mathbf{F}_{k}(\mathbf{I}-\mathbf{K}_{k}\mathbf{U}^{y}_{k}\mathbf{H}_{k})\widetilde{\mathbf{x}}_{k|k-1}-\mathbf{U}^{x}_{k}\mathbf{F}_{k}\mathbf{K}_{k}\mathbf{v}_{k}+\mathbf{w}_{k}.\label{eqn:forward ckf error dynamics}
\end{align}
\normalsize

The true state and measurement prediction error covariances are  $\mathbf{P}_{k+1|k}=\mathbb{E}\left[\widetilde{\mathbf{x}}_{k+1|k}\widetilde{\mathbf{x}}_{k+1|k}^{T}\right]$ and $\mathbf{P}^{y}_{k+1}=\mathbb{E}\left[\widetilde{\mathbf{y}}_{k+1}\widetilde{\mathbf{y}}_{k+1}^{T}\right]$, respectively. Define $\delta\mathbf{P}_{k+1|k}\doteq\bm{\Sigma}_{k+1|k}-\mathbf{P}_{k+1|k}$ and $\delta\mathbf{P}^{y}_{k+1}\doteq\bm{\Sigma}^{y}_{k+1}-\mathbf{P}^{y}_{k+1}$. Following \cite{xiong2006performance_ukf,wanasinghe2015stability}, we get
\par\noindent\small
\begin{align}
    &\bm{\Sigma}_{k+1|k}=\mathbf{U}^{x}_{k}\mathbf{F}_{k}(\mathbf{I}-\mathbf{K}_{k}\mathbf{U}^{y}_{k}\mathbf{H}_{k})\bm{\Sigma}_{k|k-1}(\mathbf{I}-\mathbf{K}_{k}\mathbf{U}^{y}_{k}\mathbf{H}_{k})^{T}\mathbf{F}_{k}^{T}\mathbf{U}^{x}_{k}+\hat{\mathbf{Q}}_{k},\\    
    &\bm{\Sigma}^{y}_{k+1}=\mathbf{U}^{y}_{k+1}\mathbf{H}_{k+1}\bm{\Sigma}_{k+1|k}\mathbf{H}_{k+1}^{T}\mathbf{U}^{y}_{k+1}+\hat{\mathbf{R}}_{k+1},\\
    &\bm{\Sigma}^{xy}_{k+1}=\begin{cases}\bm{\Sigma}_{k+1|k}\mathbf{U}^{xy}_{k+1}\mathbf{H}_{k+1}^{T}\mathbf{U}^{y}_{k+1}, & n_{x}\geq n_{y}\\
    \bm{\Sigma}_{k+1|k}\mathbf{H}_{k+1}^{T}\mathbf{U}^{y}_{k+1}\mathbf{U}^{xy}_{k+1}, & n_{x}<n_{y}\end{cases}.
\end{align}
\normalsize
Here, $\hat{\mathbf{Q}}_{k}=\mathbf{Q}_{k}+\mathbf{U}^{x}_{k}\mathbf{F}_{k}\mathbf{K}_{k}\mathbf{R}_{k}\mathbf{K}_{k}^{T}\mathbf{F}_{k}^{T}\mathbf{U}^{x}_{k}+\delta\mathbf{P}_{k+1|k}+\Delta\mathbf{P}_{k+1|k}$ and $\hat{\mathbf{R}}_{k+1}=\mathbf{R}_{k+1}+\Delta\mathbf{P}^{y}_{k+1}+\delta\mathbf{P}^{y}_{k+1}$ with $\Delta\mathbf{P}_{k+1|k}$ and $\Delta\mathbf{P}^{y}_{k+1}$ accounting for the errors in expectation approximations. The unknown matrix $\mathbf{U}^{xy}_{k+1}$ represents errors in the estimated cross-covariance $\bm{\Sigma}^{xy}_{k+1}$.

\begin{proposition}
\label{prop:forward CKF}
Consider the system \eqref{eqn:state transition x} and \eqref{eqn:observation y} with forward CKF. Forward CKF's estimation error $\widetilde{\mathbf{x}}_{k}$ is exponentially bounded in the mean-squared sense and bounded with probability one if the following holds true.\\
\textbf{C1.} There exist positive real numbers $\bar{f}$, $\bar{h}$, $\bar{\alpha}$, $\bar{\beta}$, $\bar{\gamma}$, $\underline{\sigma}$, $\bar{\sigma}$, $\bar{q}$, $\bar{r}$, $\hat{q}$ and $\hat{r}$ such that for all $k\geq 0$,
\par\noindent\small
\begin{align*}    &\|\mathbf{F}_{k}\|\leq\bar{f},\;\;\;\|\mathbf{H}_{k}\|\leq\bar{h},\;\;\;\|\mathbf{U}^{x}_{k}\|\leq\bar{\alpha},\;\;\;\|\mathbf{U}^{y}_{k}\|\leq\bar{\beta},\;\;\;\|\mathbf{U}^{xy}_{k}\|\leq\bar{\gamma},\\        &\mathbf{Q}_{k}\preceq\bar{q}\mathbf{I},\;\;\;\mathbf{R}_{k}\preceq\bar{r}\mathbf{I},\;\;\;\hat{q}\mathbf{I}\preceq\hat{\mathbf{Q}}_{k},\;\;\;\hat{r}\mathbf{I}\preceq\hat{\mathbf{R}}_{k},\;\;\;\underline{\sigma}\mathbf{I}\preceq\bm{\Sigma}_{k|k-1}\preceq\bar{\sigma}\mathbf{I}.
\end{align*}
\normalsize
\textbf{C2.} $\mathbf{U}^{x}_{k}$ and $\mathbf{F}_{k}$ are non-singular for every $k\geq 0$.\\
\textbf{C3.} The constants satisfy the inequality $\bar{\sigma}\bar{\gamma}\bar{h}^{2}\bar{\beta}^{2}<\hat{r}$.
\end{proposition}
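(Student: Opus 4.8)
The plan is to adapt the stochastic-stability methodology of Reif \emph{et al.}~\cite{reif1999stochastic}, as used for SPKFs in \cite{xiong2006performance_ukf,wanasinghe2015stability}, by constructing a stochastic Lyapunov function for the prediction error $\widetilde{\mathbf{x}}_{k|k-1}$, whose recursion is exactly \eqref{eqn:forward ckf error dynamics}. Abbreviate $\mathbf{A}_{k}\doteq\mathbf{U}^{x}_{k}\mathbf{F}_{k}(\mathbf{I}-\mathbf{K}_{k}\mathbf{U}^{y}_{k}\mathbf{H}_{k})$, so that \eqref{eqn:forward ckf error dynamics} reads $\widetilde{\mathbf{x}}_{k+1|k}=\mathbf{A}_{k}\widetilde{\mathbf{x}}_{k|k-1}+\mathbf{r}_{k}$, where $\mathbf{r}_{k}\doteq\mathbf{w}_{k}-\mathbf{U}^{x}_{k}\mathbf{F}_{k}\mathbf{K}_{k}\mathbf{v}_{k}$ is zero-mean and independent of $\widetilde{\mathbf{x}}_{k|k-1}$. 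The candidate is $V_{k}(\bm{\zeta})\doteq\bm{\zeta}^{T}\bm{\Sigma}_{k|k-1}^{-1}\bm{\zeta}$; the two-sided bound $\underline{\sigma}\mathbf{I}\preceq\bm{\Sigma}_{k|k-1}\preceq\bar{\sigma}\mathbf{I}$ in \textbf{C1} immediately gives the quadratic sandwich $\bar{\sigma}^{-1}\|\bm{\zeta}\|_{2}^{2}\le V_{k}(\bm{\zeta})\le\underline{\sigma}^{-1}\|\bm{\zeta}\|_{2}^{2}$.

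The first step is to bound the gain and establish non-singularity of the closed-loop matrix. Using the expression for $\bm{\Sigma}^{xy}_{k+1}$ together with $\bm{\Sigma}^{y}_{k+1}\succeq\hat{\mathbf{R}}_{k+1}\succeq\hat{r}\mathbf{I}$ and the norm bounds of \textbf{C1}, one obtains $\|\mathbf{K}_{k}\|\le\bar{\sigma}\bar{\gamma}\bar{h}\bar{\beta}/\hat{r}\doteq\bar{\kappa}$ and, crucially, $\|\mathbf{K}_{k}\mathbf{U}^{y}_{k}\mathbf{H}_{k}\|\le\bar{\sigma}\bar{\gamma}\bar{h}^{2}\bar{\beta}^{2}/\hat{r}<1$ by \textbf{C3}. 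Hence $\mathbf{I}-\mathbf{K}_{k}\mathbf{U}^{y}_{k}\mathbf{H}_{k}$ is non-singular; combined with the non-singularity of $\mathbf{U}^{x}_{k}\mathbf{F}_{k}$ from \textbf{C2}, this makes $\mathbf{A}_{k}$ non-singular for every $k$, while $\|\mathbf{A}_{k}\|\le\bar{\alpha}\bar{f}(1+\bar{\kappa}\bar{\beta}\bar{h})\doteq\bar{a}$ follows from \textbf{C1}.

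The second step — the crux — is the contraction estimate. Factoring $\bm{\Sigma}_{k+1|k}=\mathbf{A}_{k}(\bm{\Sigma}_{k|k-1}+\mathbf{A}_{k}^{-1}\hat{\mathbf{Q}}_{k}\mathbf{A}_{k}^{-T})\mathbf{A}_{k}^{T}$ and using non-singularity of $\mathbf{A}_{k}$ gives $\mathbf{A}_{k}^{T}\bm{\Sigma}_{k+1|k}^{-1}\mathbf{A}_{k}=(\bm{\Sigma}_{k|k-1}+\mathbf{A}_{k}^{-1}\hat{\mathbf{Q}}_{k}\mathbf{A}_{k}^{-T})^{-1}$; then $\hat{\mathbf{Q}}_{k}\succeq\hat{q}\mathbf{I}$ yields $\mathbf{A}_{k}^{-1}\hat{\mathbf{Q}}_{k}\mathbf{A}_{k}^{-T}\succeq\hat{q}\bar{a}^{-2}\mathbf{I}$, and $\bm{\Sigma}_{k|k-1}\preceq\bar{\sigma}\mathbf{I}$ yields $\mathbf{A}_{k}^{T}\bm{\Sigma}_{k+1|k}^{-1}\mathbf{A}_{k}\preceq(1-\alpha)\bm{\Sigma}_{k|k-1}^{-1}$ with $\alpha\doteq\hat{q}\bar{a}^{-2}/(\bar{\sigma}+\hat{q}\bar{a}^{-2})\in(0,1)$. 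Taking the conditional expectation of $V_{k+1}(\widetilde{\mathbf{x}}_{k+1|k})$ given $\widetilde{\mathbf{x}}_{k|k-1}$, the cross term drops out since $\mathbf{r}_{k}$ is zero-mean, and $\bm{\Sigma}_{k+1|k}^{-1}\preceq\hat{q}^{-1}\mathbf{I}$ bounds the noise term, so $\mathbb{E}[V_{k+1}\mid\widetilde{\mathbf{x}}_{k|k-1}]\le(1-\alpha)V_{k}+\mu$ with $\mu\doteq\hat{q}^{-1}\sup_{k}\mathbb{E}[\|\mathbf{r}_{k}\|_{2}^{2}]<\infty$ finite by \textbf{C1} (via $\bar{q}$, $\bar{r}$, $\bar{\alpha}$, $\bar{f}$, $\bar{\kappa}$). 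I note that the approximation-error matrices $\delta\mathbf{P}$, $\Delta\mathbf{P}$, $\delta\mathbf{P}^{y}$, $\Delta\mathbf{P}^{y}$ require no separate handling: they are already absorbed into $\hat{\mathbf{Q}}_{k}$, $\hat{\mathbf{R}}_{k}$, whose one-sided bounds $\hat{q}$, $\hat{r}$ are hypotheses in \textbf{C1}.

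The conclusion then follows from the stochastic-stability lemma of \cite{reif1999stochastic}: the quadratic sandwich for $V_{k}$ plus $\mathbb{E}[V_{k+1}\mid\widetilde{\mathbf{x}}_{k|k-1}]\le(1-\alpha)V_{k}+\mu$ imply that $\widetilde{\mathbf{x}}_{k|k-1}$ is exponentially bounded in the mean-squared sense and bounded with probability one. Finally, the identity $\widetilde{\mathbf{x}}_{k}=(\mathbf{I}-\mathbf{K}_{k}\mathbf{U}^{y}_{k}\mathbf{H}_{k})\widetilde{\mathbf{x}}_{k|k-1}-\mathbf{K}_{k}\mathbf{v}_{k}$ with $\|\mathbf{I}-\mathbf{K}_{k}\mathbf{U}^{y}_{k}\mathbf{H}_{k}\|\le 1+\bar{\kappa}\bar{\beta}\bar{h}$ and $\|\mathbf{K}_{k}\|\le\bar{\kappa}$ transfers both properties from $\widetilde{\mathbf{x}}_{k|k-1}$ to the estimation error $\widetilde{\mathbf{x}}_{k}$, as claimed. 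I expect the contraction estimate of the second step — in particular, deriving the uniform non-singularity of $\mathbf{A}_{k}$ from \textbf{C2}--\textbf{C3} and propagating the constant $\alpha$ through it cleanly — to be the main obstacle; everything else is bookkeeping with the norm bounds in \textbf{C1}. This argument also sharpens \cite{wanasinghe2015stability}, which obtained only mean-squared boundedness, by additionally furnishing the almost-sure bound.
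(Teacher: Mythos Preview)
Your proposal is correct and follows essentially the same route as the paper: the paper's proof defers to the stochastic-Lyapunov argument of \cite{wanasinghe2015stability} (itself built on \cite{reif1999stochastic,xiong2006performance_ukf}) and only highlights that \textbf{C3} supplies the missing invertibility of $\mathbf{I}-\mathbf{K}_{k}\mathbf{U}^{y}_{k}\mathbf{H}_{k}$ (whence, with \textbf{C2}, of the full closed-loop matrix $\mathbf{A}_{k}$), which is exactly the mechanism you spell out via $\|\mathbf{K}_{k}\mathbf{U}^{y}_{k}\mathbf{H}_{k}\|\le\bar{\sigma}\bar{\gamma}\bar{h}^{2}\bar{\beta}^{2}/\hat{r}<1$. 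Your write-up simply makes explicit the Lyapunov contraction and noise-bounding steps that the paper leaves to its references; there is no substantive difference in approach.
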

\begin{proof}
The proof follows the stability conditions of (forward) CKF mentioned in \cite{wanasinghe2015stability}, which are the same as the bounds in \textbf{C1}. 
However, the proof in \cite{wanasinghe2015stability} uses invertibility of $\mathbf{U}^{x}_{k}\mathbf{F}_{k}(\mathbf{I}-\mathbf{K}_{k}\mathbf{H}_{k})$ for all $k\geq 1$, which may not be true in general. In Proposition~\ref{prop:forward CKF}, similar to \cite[Theorem 2]{singh2022inverse_part1}, the inequality in \textbf{C3} guarantees $(\mathbf{I}-\mathbf{K}_{k}\mathbf{H}_{k})$ to be invertible for all $k\geq 1$. This, in turn, ensures invertibility of $\mathbf{U}^{x}_{k}\mathbf{F}_{k}(\mathbf{I}-\mathbf{K}_{k}\mathbf{H}_{k})$ under \textbf{C2}. 
\end{proof}

\textbf{I-CKF:} We show that I-CKF is stable if the forward CKF is stable as per Proposition~\ref{prop:forward CKF} under mild system conditions. Define $\widetilde{\mathbf{F}}_{k}\doteq\left.\frac{\partial\widetilde{f}(\mathbf{x},\bm{\Sigma}_{k},\mathbf{x}_{k+1},\mathbf{0})}{\partial\mathbf{x}}\right\vert_{\mathbf{x}=\doublehat{\mathbf{x}}_{k}}$ and $\mathbf{G}_{k}\doteq\left.\frac{\partial g(\mathbf{x})}{\partial\mathbf{x}}\right\vert_{\mathbf{x}=\doublehat{\mathbf{x}}_{k|k-1}}$. Denote $\overline{\mathbf{U}}^{x}_{k}$, $\overline{\mathbf{U}}^{a}_{k}$ and $\overline{\mathbf{U}}^{xa}_{k}$ as the unknown matrices introduced to account for errors in linearizing the functions $\widetilde{f}(\cdot)$, $g(\cdot)$ and the I-CKF's cross-covariance matrix estimation, respectively. Further, $\hat{\overline{\mathbf{Q}}}_{k}$ and $\hat{\overline{\mathbf{R}}}_{k}$ are counterparts of $\hat{\mathbf{Q}}_{k}$ and $\hat{\mathbf{R}}_{k}$, respectively, in the I-CKF's error dynamics.

\begin{theorem}
\label{thm:I-CKF}
Assume a stable forward CKF as per Proposition~\ref{prop:forward CKF}. The I-CKF's state estimation error is exponentially bounded in mean-squared sense and bounded with probability one if the following conditions hold true.\\
 \textbf{C4.} There exist positive real numbers $\bar{g},\bar{c},\bar{d},\bar{\epsilon},\hat{c},\hat{d},\underline{p}$ and $\bar{p}$ such that for all $k\geq 0$,
 \par\noindent\small
\begin{align*}      &\|\mathbf{G}_{k}\|\leq\bar{g},\;\;\|\overline{\mathbf{U}}^{a}_{k}\|\leq\bar{c},\;\;\|\overline{\mathbf{U}}^{xa}_{k}\|\leq\bar{d},\;\;\overline{\mathbf{R}}_{k}\preceq\bar{\epsilon}\mathbf{I},\;\;\hat{c}\mathbf{I}\preceq\hat{\overline{\mathbf{Q}}}_{k},\;\;\hat{d}\mathbf{I}\preceq\hat{\overline{R}}_{k},\;\;\underline{p}\mathbf{I}\preceq\overline{\bm{\Sigma}}_{k|k-1}\preceq\bar{p}\mathbf{I}.
\end{align*}
\normalsize
\textbf{C5.} There exist a real constant $\underline{y}$ (not necessarily positive) such that $\bm{\Sigma}^{y}_{k}\succeq\underline{y}\mathbf{I}$ for all $k\geq 0$.\\
\textbf{C6.} The functions $f(\cdot)$ and $h(\cdot)$ are bounded as
\par\noindent\small
\begin{align}
    &\|f(\cdot)\|_{2}\leq\delta_{f},\label{eqn:f bound}\\
    &\|h(\cdot)\|_{2}\leq\delta_{h},\label{eqn:h bound}
\end{align}
\normalsize
for some real positive numbers $\delta_{f}$ and $\delta_{h}$.\\
\textbf{C7.} For all $k\geq 0$, $\widetilde{\mathbf{F}}_{k}$ is non-singular and satisfies
\par\noindent\small
\begin{align}
    &\|\widetilde{\mathbf{F}}^{-1}_{k}\|\leq\bar{a},\label{eqn:F inv bound}
\end{align}
\normalsize
for some positive real constant $\bar{a}$.\\
\textbf{C8.} The constants satisfy the inequality $\bar{p}\bar{d}\bar{g}^{2}\bar{c}^{2}<\hat{d}$.
\end{theorem}
\begin{proof}
    Under the assumptions of Theorem~\ref{thm:I-CKF}, I-CKF's error dynamics can be shown to satisfy the stability conditions \textbf{C1}-\textbf{C3} for a general CKF provided in Proposition~\ref{prop:forward CKF}. Employing the procedure to bound the derivatives as in the proof of I-UKF \cite{singh2023iukf}, it follows that the bounds \textbf{C5}, \eqref{eqn:f bound} and \eqref{eqn:h bound} ensure that the Jacobian $\widetilde{\mathbf{F}}_{k}$ is upper-bounded by a constant $c_{f}>0$ for all $k\geq 0$ as
    \par\noindent\small
    \begin{align}
        &\|\widetilde{\mathbf{F}}_{k}\|\leq c_{f}.
    \end{align}
    \normalsize
    Further, using the unknown matrices from forward CKF's error dynamics (cf. \cite[Appendix A.3]{singh2023iukf}), it follows that 
    \par\noindent\small
    \begin{align}
        &\overline{\mathbf{U}}^{x}_{k}=(\mathbf{I}-\mathbf{K}_{k+1}\mathbf{U}^{y}_{k+1}\mathbf{H}_{k+1})\mathbf{U}^{x}_{k}\mathbf{F}_{k}\widetilde{\mathbf{F}}_{k}^{-1}.
    \end{align}
    \normalsize
    Hence, the non-singularity of $\widetilde{\mathbf{F}}_{k}$ (\textbf{C7}) leads to $\overline{\mathbf{U}}^{x}_{k}$ being invertible because $\mathbf{F}_{k}$, $\mathbf{U}^{x}_{k}$ and $(\mathbf{I}-\mathbf{K}_{k+1}\mathbf{U}^{y}_{k+1}\mathbf{H}_{k+1})$ are invertible under \textbf{C1}-\textbf{C3} of Proposition~\ref{prop:forward CKF}. The bound \eqref{eqn:F inv bound} provides an upper-bound
    \par\noindent\small
    \begin{align}
        &\|\overline{\mathbf{U}}^{x}_{k}\|\leq\bar{\alpha}\bar{f}\bar{a}(1+\bar{k}\bar{\beta}\bar{h}),
    \end{align}
    \normalsize
    where $\|\mathbf{K}_{k+1}\|\leq\bar{k}=\bar{\sigma}\bar{\gamma}\bar{h}\bar{\beta}/\hat{r}$ for forward CKF is obtained similarly as in \cite[Theorem 2]{singh2022inverse_part1}. All other conditions for CKF stability trivially hold true for the I-CKF's error dynamics under \textbf{C4}-\textbf{C8}.
\end{proof}
\begin{remark}[Practical bounds on system functions]
    Intuitively, the bounds in \textbf{C6} represent the physical constraints on the state of the process being observed and its observations. For instance, in a radar's target localization problem, the target location is reasonably upper-bounded by the maximum unambiguous range and beam pattern (main lobe) of the radar. The noises $\mathbf{w}_{k}$ and $\mathbf{v}_{k}$ then represent, respectively, the modeling and measurement uncertainties that are assumed to be Gaussian to obtain simplified closed-form solutions\cite{ristic2003beyond}. Note that Theorem~1 assumes the attacker's observation function $h(\cdot)$ to be bounded, but the defender's observation function $g(\cdot)$ is, in general, unbounded.
\end{remark}

\subsection{Consistency}
Recall the following definition.
\begin{definition}[Consistency of estimator\cite{battistelli2014kullback}]
An unbiased estimate $\hat{\mathbf{x}}$ of random variable $\mathbf{x}$ and the corresponding error covariance estimate $\bm{\Sigma}$ are defined to be consistent if $\mathbb{E}[(\mathbf{x}-\hat{\mathbf{x}})(\mathbf{x}-\hat{\mathbf{x}})^{T}]\preceq\bm{\Sigma}$, i.e., the estimated covariance $\bm{\Sigma}$ upper bounds the true error covariance. 
\end{definition}
Consider the statistical linearization technique (SLT)\cite{arasaratnam2007qkf} to linearize \eqref{eqn:ICKF state transition} and \eqref{eqn:observation a}, respectively, at the augmented state $\mathbf{z}_{k}$ and (forward) estimate $\hat{\mathbf{x}}_{k}$ as
\par\noindent\small
\begin{align}
    \hat{\mathbf{x}}_{k+1}&=\mathbf{U}^{z}_{k}\overline{\mathbf{F}}^{x}_{k}\hat{\mathbf{x}}_{k}+\mathbf{U}^{z}_{k}\overline{\mathbf{F}}^{v}_{k}\mathbf{v}_{k+1},\label{eqn:SLT state transition}\\
    \mathbf{a}_{k}&=\mathbf{U}^{a}_{k}\overline{\mathbf{G}}_{k}\hat{\mathbf{x}}_{k}+\bm{\epsilon}_{k},\label{eqn:SLT observation}
\end{align}
\normalsize
where $\overline{\mathbf{F}}_{k}=[\overline{\mathbf{F}}^{x}_{k},\overline{\mathbf{F}}^{v}_{k}]$ and $\overline{\mathbf{G}}_{k}$ are the respective linear pseudo transition matrices. Also, $\mathbf{U}^{z}_{k}$ and $\mathbf{U}^{a}_{k}$ are unknown diagonal matrices introduced to account for the approximation errors in SLT. Note that these unknown matrices are different from the ones introduced in Theorem~\ref{thm:I-CKF} for the higher-order terms in the Taylor approximation.
\begin{theorem}
    \label{thm:consistency}
    Consider a consistent initial estimate pair $(\doublehat{\mathbf{x}}_{0},\overline{\bm{\Sigma}}_{0})$ for the inverse filter. Then, for any $k\geq 1$, the estimate $(\doublehat{\mathbf{x}}_{k},\overline{\bm{\Sigma}}_{k})$ computed recursively by the proposed I-CKF/QKF/CQKF are also consistent such that 
    \par\noindent\small
    \begin{align}
        &\mathbb{E}[(\hat{\mathbf{x}}_{k}-\doublehat{\mathbf{x}}_{k})(\hat{\mathbf{x}}_{k}-\doublehat{\mathbf{x}}_{k})^{T}]\preceq\overline{\bm{\Sigma}}_{k},
    \end{align}
    \normalsize
    where $\hat{\mathbf{x}}_{k}$ is the forward filter's state estimate.
\end{theorem}
\begin{proof}
    We prove the theorem by the principle of mathematical induction. Define the prediction and estimation errors, respectively, as $\hat{\widetilde{\mathbf{x}}}_{k|k-1}\doteq\hat{\mathbf{x}}_{k}-\doublehat{\mathbf{x}}_{k|k-1}$ and $\hat{\widetilde{\mathbf{x}}}_{k}\doteq\hat{\mathbf{x}}_{k}-\doublehat{\mathbf{x}}_{k}$. Assume $\mathbb{E}[\hat{\widetilde{\mathbf{x}}}_{k}\hat{\widetilde{\mathbf{x}}}_{k}^{T}]\preceq\overline{\bm{\Sigma}}_{k}$. We show that the inequality also holds for $(k+1)$-th time step. Substituting \eqref{eqn:SLT state transition} in the I-CKF/QKF/CQKF's recursions and using the symmetry of the generated cubature/ quadrature/ CQ points, we have $\doublehat{\mathbf{x}}_{k+1|k}=\mathbf{U}^{z}_{k}\overline{\mathbf{F}}^{x}_{k}\doublehat{\mathbf{x}}_{k}$ and
    \par\noindent\small
    \begin{align}
        &\overline{\bm{\Sigma}}_{k+1|k}=\mathbf{U}^{z}_{k}\overline{\mathbf{F}}^{x}_{k}\overline{\bm{\Sigma}}_{k}(\overline{\mathbf{F}}^{x}_{k})^{T}\mathbf{U}^{z}_{k}+\mathbf{U}^{z}_{k}\overline{\mathbf{F}}^{v}_{k}\mathbf{R}_{k+1}(\overline{\mathbf{F}}^{v}_{k})^{T}\mathbf{U}^{z}_{k}.\label{eqn:consist predict sig}
    \end{align}
    \normalsize
    Hence, $\hat{\widetilde{\mathbf{x}}}_{k+1|k}=\mathbf{U}^{z}_{k}\overline{\mathbf{F}}^{x}_{k}\hat{\widetilde{\mathbf{x}}}_{k}+\mathbf{U}^{z}_{k}\overline{\mathbf{F}}^{v}_{k}\mathbf{v}_{k+1}$ such that $\mathbb{E}[\hat{\widetilde{\mathbf{x}}}_{k+1|k}\hat{\widetilde{\mathbf{x}}}_{k+1|k}^{T}]=\mathbf{U}^{z}_{k}\overline{\mathbf{F}}^{x}_{k}\mathbb{E}[\hat{\widetilde{\mathbf{x}}}_{k}\hat{\widetilde{\mathbf{x}}}_{k}^{T}](\overline{\mathbf{F}}^{x}_{k})^{T}\mathbf{U}^{z}_{k}+\mathbf{U}^{z}_{k}\overline{\mathbf{F}}^{v}_{k}\mathbf{R}_{k+1}(\overline{\mathbf{F}}^{v}_{k})^{T}\mathbf{U}^{z}_{k}$. Using $\mathbb{E}[\hat{\widetilde{\mathbf{x}}}_{k}\hat{\widetilde{\mathbf{x}}}_{k}^{T}]\preceq\overline{\bm{\Sigma}}_{k}$ and \eqref{eqn:consist predict sig}, we have $\mathbb{E}[\hat{\widetilde{\mathbf{x}}}_{k+1|k}\hat{\widetilde{\mathbf{x}}}_{k+1|k}^{T}]\preceq\overline{\bm{\Sigma}}_{k+1|k}$. Similarly, using \eqref{eqn:SLT observation}, we have $\hat{\mathbf{a}}_{k+1|k}=\mathbf{U}^{a}_{k+1}\overline{\mathbf{G}}_{k+1}\doublehat{\mathbf{x}}_{k+1|k}$ and
    \par\noindent\small
    \begin{align}        &\overline{\bm{\Sigma}}^{a}_{k+1}=\mathbf{U}^{a}_{k+1}\overline{\mathbf{G}}_{k+1}\overline{\bm{\Sigma}}_{k+1|k}\overline{\mathbf{G}}_{k+1}^{T}\mathbf{U}^{a}_{k+1}+\overline{\mathbf{R}}_{k+1},\label{eqn:consistency intermediate 2}
    \end{align}
    \normalsize
    with $\overline{\bm{\Sigma}}^{xa}_{k+1}=\overline{\bm{\Sigma}}_{k+1|k}\overline{\mathbf{G}}_{k+1}^{T}\mathbf{U}^{a}_{k+1}$. Again,
    \par\noindent\small
    \begin{align}        
    &\hat{\widetilde{\mathbf{x}}}_{k+1}=(\mathbf{I}-\overline{\mathbf{K}}_{k+1}\mathbf{U}^{a}_{k+1}\overline{\mathbf{G}}_{k+1})\hat{\widetilde{\mathbf{x}}}_{k+1|k}-\overline{\mathbf{K}}_{k+1}\bm{\epsilon}_{k+1},
    \end{align}
    \normalsize
    which implies 
    \par\noindent\small
    \begin{align}        
    &\mathbb{E}[\hat{\widetilde{\mathbf{x}}}_{k+1}\hat{\widetilde{\mathbf{x}}}_{k+1}^{T}]=\overline{\mathbf{K}}_{k+1}\overline{\mathbf{R}}_{k+1}\overline{\mathbf{K}}_{k+1}^{T}+(\mathbf{I}-\overline{\mathbf{K}}_{k+1}\mathbf{U}^{a}_{k+1}\overline{\mathbf{G}}_{k+1})\mathbb{E}[\hat{\widetilde{\mathbf{x}}}_{k+1|k}\hat{\widetilde{\mathbf{x}}}_{k+1|k}^{T}](\mathbf{I}-\overline{\mathbf{K}}_{k+1}\mathbf{U}^{a}_{k+1}\overline{\mathbf{G}}_{k+1})^{T}.
    \end{align}
    \normalsize
    Finally, using $\mathbb{E}[\hat{\widetilde{\mathbf{x}}}_{k+1|k}\hat{\widetilde{\mathbf{x}}}_{k+1|k}^{T}]\preceq\overline{\bm{\Sigma}}_{k+1|k}$ and \eqref{eqn:consistency intermediate 2}, we obtain $\mathbb{E}[\hat{\widetilde{\mathbf{x}}}_{k+1}\hat{\widetilde{\mathbf{x}}}_{k+1}^{T}]\preceq\overline{\bm{\Sigma}}_{k+1}$ after simplifying the right-hand side of the inequality.
\end{proof}

\section{Numerical experiments}
\label{sec:numericals}
We consider different example systems widely used to analyze CKF, QKF, and CQKF performances and compare the inverse filters' accuracy with I-UKF\cite{singh2023iukf} and I-EKF\cite{singh2022inverse_part1}. Note that, in general, the estimation performance of different non-linear filtering approaches also depends on the system models. This principle also extends to the non-linear inverse filters. However, in contrast to the forward filter, the inverse filter utilizes its perfect knowledge of the defender's true state, in addition to observations $\{\mathbf{a}_{j}\}_{1\leq j\leq k}$. Also, the inverse filter's process noise is non-additive even when the noise terms $\mathbf{w}_{k}$, $\mathbf{v}_{k}$ and $\bm{\epsilon}_{k}$ are assumed to be additive and Gaussian. We further consider the RCRLB\cite{tichavsky1998posterior} for the state estimation error as the theoretical benchmark. Denote the state vector series as $X^{k}=\lbrace\mathbf{x}_{0},\mathbf{x}_{1},\hdots,\mathbf{x}_{k}\rbrace$ and the observations as $Y^{k}=\lbrace\mathbf{y}_{0},\mathbf{y}_{1},\hdots,\mathbf{y}_{k}\rbrace$ with $p(Y^{k},X^{k})$ as the joint probability density of pair $(Y^{k},X^{k})$. The RCRLB provides a lower bound on mean-squared error (MSE) for the discrete-time non-linear filtering and is defined as
\par\noindent\small
\begin{align}        
    \mathbb{E}\left[(\mathbf{x}_{k}-\hat{\mathbf{x}}_{k})(\mathbf{x}_{k}-\hat{\mathbf{x}}_{k})^{T}\right]\succeq\mathbf{J}_{k}^{-1}.
\end{align}
\normalsize
Here, $\mathbf{J}_{k}=\mathbb{E}\left[-\frac{\partial^{2}\ln{p(Y^{k},X^{k})}}{\partial\mathbf{x}_{k}^{2}}\right]$ is the Fisher information matrix with $\frac{\partial^{2}(\cdot)}{\partial\mathbf{x}^{2}}$ as the Hessian with second order partial derivatives and $\hat{\mathbf{x}}_{k}$ is an estimate of $\mathbf{x}_{k}$.

For the non-linear system given by \eqref{eqn:state transition x} and \eqref{eqn:observation y}, the forward information matrices $\lbrace\mathbf{J}_{k}\rbrace$ recursions are computed recursively as\cite{xiong2006performance_ukf}
\par\noindent\small
\begin{align}        
    \mathbf{J}_{k+1}&=\mathbf{Q}_{k}^{-1}+\mathbf{H}_{k+1}^{T}\mathbf{R}_{k+1}^{-1}\mathbf{H}_{k+1}-\mathbf{Q}_{k}^{-1}\mathbf{F}_{k}(\mathbf{J}_{k}+\mathbf{F}_{k}^{T}\mathbf{Q}_{k}^{-1}\mathbf{F}_{k})^{-1}\mathbf{F}_{k}^{T}\mathbf{Q}_{k}^{-1},
\end{align}
\normalsize
where $\mathbf{F}_{k}=\frac{\partial f(\mathbf{x})}{\partial\mathbf{x}}\vert_{\mathbf{x}=\mathbf{x}_{k}}$ and $\mathbf{H}_{k}=\frac{\partial h(\mathbf{x})}{\partial\mathbf{x}}\vert_{\mathbf{x}=\mathbf{x}_{k}}$. The information matrix recursions are trivially modified to obtain the inverse filter's information matrix $\overline{\mathbf{J}}_{k}$. Unless stated otherwise, the forward and inverse filters' time-averaged root MSE (RMSE) at $k$-th time step are, respectively, 
\par\noindent\small
\begin{align}
    &r_{k}=\sqrt{\frac{1}{k}\sum_{k'=1}^{k}\left(\frac{1}{M}\sum_{m=1}^{M}\|\mathbf{x}_{k'}-\hat{\mathbf{x}}_{k'}\|_{2}^{2}\right)},\label{eqn:forward RMSE}
\end{align}\normalsize
and \par\noindent\small
\begin{align}
    &\overline{r}_{k}=\sqrt{\frac{1}{k}\sum_{k'=1}^{k}\left(\frac{1}{M}\sum_{m=1}^{M}\|\hat{\mathbf{x}}_{k'}-\doublehat{\mathbf{x}}_{k'}\|_{2}^{2}\right)},\label{eqn:inverse RMSE}
\end{align}
\normalsize
where $M$ is the number of independent Monte-Carlo runs.

\subsection{I-CKF for target tracking} 
\label{subsec:target tracking}
Consider a target maneuvering at an unknown constant turn rate $\Omega$ in a horizontal plane with a fixed radar tracking its trajectory with range and bearing measurements using CKF\cite{arasaratnam2009cubature}. Denote the target's state at $k$-th time instant as $\mathbf{x}_{k}=[p^{x}_{k},v^{x}_{k},p^{y}_{k},v^{y}_{k},\Omega]^{T}$ where $p^{x}_{k}$ and $p^{y}_{k}$ are the positions, and $v^{x}_{k}$ and $v^{y}_{k}$ are the velocities in $x$ and $y$ directions, respectively. The non-linear system model is\cite{bar2004estimation} 
\par\noindent\small
\begin{align*}
    \mathbf{x}_{k+1}&=\begin{bsmallmatrix}
        1&\sin{(\Omega T)}/\Omega& 0 &-(1-\cos{(\Omega T)})/\Omega & 0\\
        0&\cos{(\Omega T)}& 0 &-\sin{(\Omega T)}& 0\\
        0&(1-\cos{(\Omega T)})/\Omega& 1 &\sin{(\Omega T)}/\Omega& 0\\
        0&\sin{(\Omega T)}& 0 &\cos{(\Omega T)}& 0\\
        0& 0& 0& 0& 1
    \end{bsmallmatrix}\mathbf{x}_{k}+\mathbf{w}_{k},
\end{align*}
\begin{align*}
    \mathbf{y}_{k}&=\begin{bsmallmatrix}
        \sqrt{(p^{x}_{k})^{2}+(p^{y}_{k})^{2}}\\
        \tan^{-1}(p^{y}_{k}/p^{x}_{k})
    \end{bsmallmatrix}+\mathbf{v}_{k},\\
    \mathbf{a}_{k}&=\begin{bmatrix}
        \sqrt{(\hat{p}^{x}_{k})^{2}+(\hat{p}^{y}_{k})^{2}}\\
        \tan^{-1}(\hat{p}^{y}_{k}/\hat{p}^{x}_{k})
    \end{bmatrix}+\bm{\epsilon}_{k},
\end{align*}
\normalsize
where we considered the inverse filter's observations similar to that of the forward filter. Note that we chose similar observation functions $h(\cdot)$ and $g(\cdot)$ only to compare the relative performance of the forward and inverse filters. Here, $\hat{p}^{x}_{k}$ and $\hat{p}^{y}_{k}$ are the forward filter's estimates of $p^{x}_{k}$ and $p^{y}_{k}$, respectively. The noise terms $\mathbf{w}_{k}\sim\mathcal{N}(\mathbf{0},\mathbf{Q})$, $\mathbf{v}_{k}\sim\mathcal{N}(\mathbf{0},\mathbf{R})$ and $\bm{\epsilon}_{k}\sim\mathcal{N}(\mathbf{0},\bm{\Sigma}_{\epsilon})$, with $\bm{\Sigma}_{\epsilon}=\mathbf{R}$. All other parameters, including the initial estimates and noise covariance matrices, were identical to \cite{arasaratnam2009cubature}. In particular, we set $\Omega=-3^{\circ}$ $\textrm{s}^{-1}$, $T=1$ $\textrm{s}$, $\mathbf{Q}=\textrm{diag}(q_{1}\mathbf{M},q_{1}\mathbf{M},q_{2}T)$ and $\mathbf{R}=\textrm{diag}(\sigma_{r}^{2},\sigma_{\theta}^{2})$, where $q_{1}=0.1$ $\textrm{m}^{2}\textrm{s}^{-3}$, $q_{2}=1.75\times 10^{-4}$ $\textrm{s}^{-3}$, $\sigma_{r}=10$ $\textrm{m}$, $\sigma_{\theta}=\sqrt{10}$ $\textrm{mrad}$ and $\mathbf{M}=\begin{bsmallmatrix}T^{3}/3 & T^{2}/2\\ T^{2}/2 & T\end{bsmallmatrix}$. Also, $\mathbf{x}_{0}=\doublehat{\mathbf{x}}_{0}=[1000$ $\textrm{m},300$ $\textrm{ms}^{-1},1000$ $\textrm{m},0$ $\textrm{ms}^{-1},-3^{\circ}$ $\textrm{s}^{-1}]^{T}$ while $\hat{\mathbf{x}}_{0}\sim\mathcal{N}(\mathbf{x}_{0},\bm{\Sigma}_{0})$ with $\bm{\Sigma}_{0}=\overline{\bm{\Sigma}}_{0}=\textrm{diag}(100$ $\textrm{m}^{2},10$ $\textrm{m}^{2}\textrm{s}^{-2},100$ $\textrm{m}^{2},10$ $\textrm{m}^{2}\textrm{s}^{-2},100$ $\textrm{mrad}^{2}\textrm{s}^{-2})$. We compared the I-CKF's performance with I-UKF and set both forward UKF's $\kappa$ and I-UKF's $\overline{\kappa}$ to $1$. 

Fig.~\ref{fig:tracking lorenz}a shows the time-averaged RMSE in velocity estimation and its RCRLB (also, time-averaged) for a system that employs forward and inverse CKF, hereafter labeled \textit{ICKF-C system}. 
We define the \textit{ICKF-U system} as the one 
wherein the defender employs I-CKF, assuming a forward CKF when the attacker's true forward filter is UKF. The other notations in Fig.~\ref{fig:tracking lorenz} and also, in further experiments, are similarly defined. The RCRLB is computed as $\sqrt{[\mathbf{J}^{-1}]_{2,2}+[\mathbf{J}^{-1}]_{4,4}}$, where $\mathbf{J}$ is the corresponding information matrix. We observe that forward CKF and UKF have similar estimation accuracy. Hence, both ICKF-C and ICKF-U yield similar estimation errors regardless of the actual forward filter. Although the forward and inverse filters have similar RCRLBs, the difference between the estimation error and RCRLB for I-CKF is less than that for the forward CKF. Hence, I-CKF outperforms forward CKF in terms of achieving the lower bound. Note that the forward and inverse filters are compared only to highlight their relative accuracy. For the considered system, I-UKF's error and RCRLB are similar to that of I-CKF.
\begin{figure*}
  \centering
  \includegraphics[width = 1.0\textwidth]{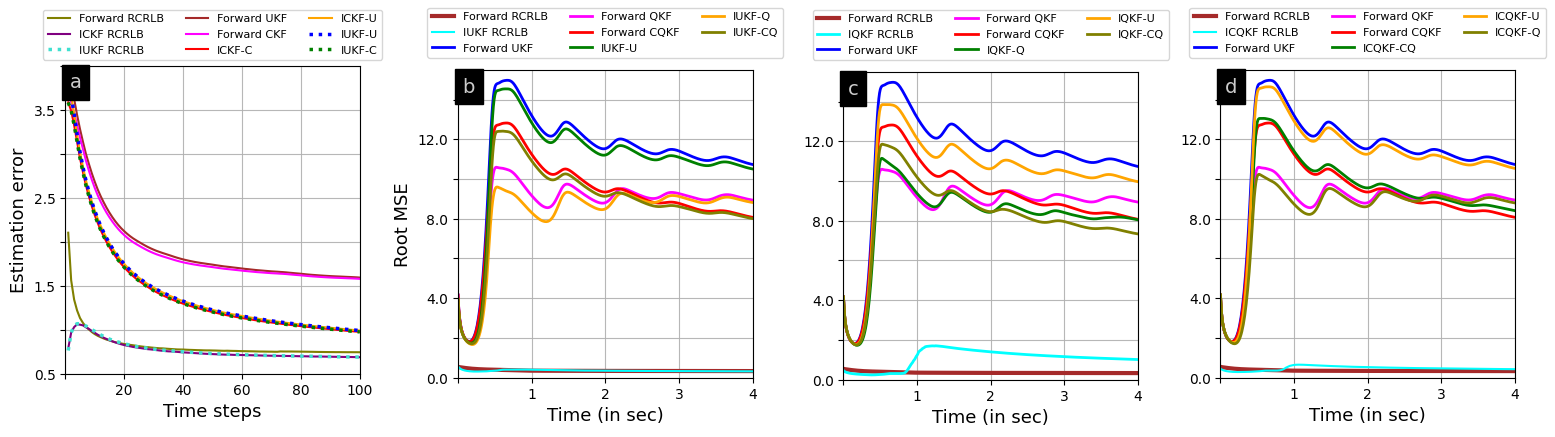}
  \caption{Time-averaged estimation error 
  of forward and inverse (a) CKFs for target tracking system (averaged over $250$ runs); (b) UKFs, (c) QKFs and (d) CQKFs for Lorenz system (averaged over $50$ runs).}
 \label{fig:tracking lorenz}
\end{figure*}

Table~\ref{tbl:CKF target tracking} lists the total run time for $100$ time steps (in one Monte-Carlo run) of different filters for the target tracking system. Forward CKF and UKF have similar complexity, while all inverse filters require a longer run time than forward filters. Besides increased state dimension, the computational efforts of the inverse filters are also increased because of the computation of approximation $\bm{\Sigma}^{*}_{k}$ of the forward filter's $\bm{\Sigma}_{k}$ at each time-step. However, assuming a forward CKF and employing I-CKF is slightly less expensive than I-UKF. This reduction is observed because I-CKF processes $2n_{z}$ cubature points instead of $2n_{z}+1$ sigma points considered in I-UKF.
    \begin{table}
    \caption{Run time of different filters for target tracking system.}
    \label{tbl:CKF target tracking}
    \centering
    \begin{tabular}{p{2.5cm}p{2.0cm}}
    \hline\noalign{\smallskip}
    Filter & Time (seconds)\\
    \noalign{\smallskip}
    \hline
    \noalign{\smallskip}
    Forward UKF & 0.0179\\
    Forward CKF & 0.0188\\
    IUKF-U & 0.1388\\
    IUKF-C & 0.1214\\
    ICKF-U & 0.0902\\
    ICKF-C & 0.1137\\
    \noalign{\smallskip}
    \hline\noalign{\smallskip}
    \end{tabular}
    \end{table}

\subsection{I-QKF and I-CQKF for Lorenz system} 
\label{subsec:lorenz system}
Consider the following $3$-dimensional system derived from the Lorenz stochastic differential system \cite{ito2000gaussian}: \par\noindent\small
\begin{align*}
&\mathbf{x}_{k+1}=\begin{bsmallmatrix}
    [\mathbf{x}_{k}]_{1}+\Delta tr_{1}(-[\mathbf{x}_{k}]_{1}+[\mathbf{x}_{k}]_{2})\\
    [\mathbf{x}_{k}]_{2}+\Delta t(r_{2}[\mathbf{x}_{k}]_{1}-[\mathbf{x}_{k}]_{2}-[\mathbf{x}_{k}]_{1}[\mathbf{x}_{k}]_{3})\\
    [\mathbf{x}_{k}]_{3}+\Delta t(-r_{3}[\mathbf{x}_{k}]_{3}+[\mathbf{x}_{k}]_{1}[\mathbf{x}_{k}]_{2})
\end{bsmallmatrix}+\begin{bsmallmatrix}
    0\\0\\0.5
\end{bsmallmatrix}w_{k},\\
& y_{k}=\Delta t\sqrt{([\mathbf{x}_{k}]_{1}-0.5)^{2}+[\mathbf{x}_{k}]_{2}^{2}+[\mathbf{x}_{k}]_{3}^{2}}+0.065v_{k},\\
& a_{k}=\Delta t\sqrt{[\mathbf{x}_{k}]_{1}^{2}+([\mathbf{x}_{k}]_{2}-0.5)^{2}+[\mathbf{x}_{k}]_{3}^{2}}+0.1\epsilon_{k},
\end{align*}
\normalsize
where $w_{k}, v_{k}, \epsilon_{k}\sim\mathcal{N}(0,\Delta t)$ with parameters $\Delta t=0.01$, $r_{1}=10$, $r_{2}=28$ and $r_{3}=8/3$ such that the system has three unstable equilibria. 
The observations $\mathbf{a}_{k}$ are of the same form as $\mathbf{y}_{k}$. The attacker employs a $5$-point forward QKF and a second-order forward CQKF, while the defender considers a $3$-point I-QKF assuming the forward QKF's $m=3$ and a second-order I-CQKF. The inverse filters' accuracy is compared with I-UKF with $\overline{\kappa}=2$ and a forward UKF with $\kappa=1.5$. The initial state $\mathbf{x}_{0}$ and the inverse filters' estimate $\doublehat{\mathbf{x}}_{0}$ were all set to $[-0.2,-0.3,-0.5]^{T}$. The forward filters' estimate $\hat{\mathbf{x}}_{0}$ were chosen as $[1.35,-3,6]^{T}$. All the initial covariance estimate were set to $0.35\mathbf{I}$ with $\mathbf{J}_{0}=\bm{\Sigma}_{0}^{-1}$ and $\overline{\mathbf{J}}_{0}=\overline{\bm{\Sigma}}_{0}^{-1}$.

Fig.~\ref{fig:tracking lorenz}b-d shows the time-averaged RMSE and RCRLB for state estimation for forward and inverse UKF, QKF, and CQKF, including the mismatched inverse filter cases. The RCRLB for state estimation is $\sqrt{\textrm{Tr}(\mathbf{J}^{-1})}$, where $\mathbf{J}$ is the corresponding information matrix. 
For the Lorenz system, forward QKF and CQKF estimate the state more accurately than forward UKF. Regardless of the forward filter assumption, I-UKF has a similar performance as the corresponding forward filter. For instance, IUKF-U and forward UKF have similar estimation errors. On the other hand, from Fig.~\ref{fig:tracking lorenz}c, we observe that I-QKF outperforms the forward filters in all cases, i.e., IQKF-Q, IQKF-U and IQKF-CQ have lower errors than forward QKF, UKF and CQKF, respectively. Contrarily, in Fig.~\ref{fig:tracking lorenz}d, ICQKFs closely follow the corresponding forward filters' errors. Interestingly, for the considered system, I-UKF's and I-CQKF's RCRLB is the same as that for the forward filters, which is slightly less than that for I-QKF. In spite of this, I-QKF has higher estimation accuracy than I-UKF, I-CQKF, and the forward filters, but with higher computational efforts.

Table~\ref{tbl:QKF lorenz} lists the total run time of different filters for the Lorenz system. Note that $\{\bm{\zeta}_{i},\omega_{i}\}_{1\leq i\leq m^{n_{x}}}$ ($\{\bm{\xi}_{i},\omega_{i}\}$) and $\{\overline{\bm{\zeta}}_{j},\overline{\omega}_{j}\}_{1\leq j\leq\overline{m}^{n_{z}}}$ ($\{\overline{\bm{\xi}}_{i},\overline{\omega}_{i}\}$) in forward QKF (CQKF) and I-QKF (I-CQKF), respectively, are computed offline and not included in their run times. As expected, forward QKF is computationally more expensive than forward UKF, while forward CQKF is comparable to the latter. Again, inverse filters have longer run times than the corresponding forward filters. However, I-QKF's increase in run time is much more than that for I-UKF and I-CQKF. As noted earlier, QKF's complexity increases exponentially with the state dimension, which is more significant in I-QKF. Note that I-CQKF's run time is longer than I-UKF's but significantly lower than I-QKF's.
    \begin{table}
    \caption{Run time of different filters for Lorenz system.}
    \label{tbl:QKF lorenz}
    \centering
    \begin{tabular}{p{2.5cm}p{2.0cm}}
    \hline\noalign{\smallskip}
    Filter & Time (seconds)\\
    \noalign{\smallskip}
    \hline
    \noalign{\smallskip}
    Forward UKF & 0.0541\\
    Forward QKF & 0.6026\\
    Forward CQKF & 0.0742\\
    IUKF-U & 0.4438\\
    IUKF-Q & 0.3904\\
    IUKF-CQ & 0.3921\\
    IQKF-U & 11.2705\\
    IQKF-Q & 11.3383\\
    IQKF-CQ & 11.2561\\
    ICQKF-U & 1.0726\\
    ICQKF-Q & 1.0782\\
    ICQKF-CQ & 1.106\\
    \noalign{\smallskip}
    \hline\noalign{\smallskip}
    \end{tabular}
    \end{table}

\subsection{I-CKF and I-QKF for FM demodulator} 
\label{subsec:FM demod}
Consider the FM demodulator system \cite[Sec. 8.2]{anderson2012optimal}
\par\noindent\small
\begin{align*}
&\mathbf{x}_{k+1}\doteq\begin{bmatrix}\lambda_{k+1}\\\theta_{k+1}\end{bmatrix}=\begin{bmatrix}\exp{(-T/\beta)}&0\\-\beta \exp{(-T/\beta)}-1&1\end{bmatrix}\begin{bmatrix}\lambda_{k}\\\theta_{k}\end{bmatrix}+\begin{bmatrix}1\\-\beta\end{bmatrix}w_{k},\\
&\mathbf{y}_{k}=\sqrt{2}\begin{bmatrix}\sin{\theta_{k}}\\\cos{\theta_{k}}\end{bmatrix}+\mathbf{v}_{k},\;\;
a_{k}=\hat{\lambda}_{k}^{2}+\epsilon_{k},
\end{align*}
\normalsize
with $w_{k}\sim\mathcal{N}(0,0.01)$, $\mathbf{v}_{k}\sim\mathcal{N}(\mathbf{0},\mathbf{I}_{2})$, $\epsilon_{k}\sim\mathcal{N}(0,5)$, $T=2\pi/16$ and $\beta=100$. Also, $\hat{\lambda}_{k}$ is the forward filter's estimate of $\lambda_{k}$. For forward and inverse UKF, $\kappa$ and $\overline{\kappa}$ both were set to $1$. Again, we considered a $3$-point forward QKF and a $3$-point I-QKF, but I-QKF assumed forward QKF's $m$ to be $5$. Besides I-UKF, we also compare the proposed filters with I-EKF\cite{singh2022inverse_part1}. The initial state $\mathbf{x}_{0}\doteq[\lambda_{0},\theta_{0}]^{T}$ was set randomly with $\lambda_{0}\sim\mathcal{N}(0,1)$ and $\theta_{0}\sim\mathcal{U}[-\pi,\pi]$ (uniform distribution). The initial state estimates of forward and inverse filters were also similarly drawn at random. The initial covariances were set to $\bm{\Sigma}_{0}=10\mathbf{I}_{2}$ and $\overline{\bm{\Sigma}}_{0}=5\mathbf{I}_{2}$ for forward and inverse filters, respectively.
\begin{figure*}
  \centering
  \includegraphics[width = 1.0\textwidth]{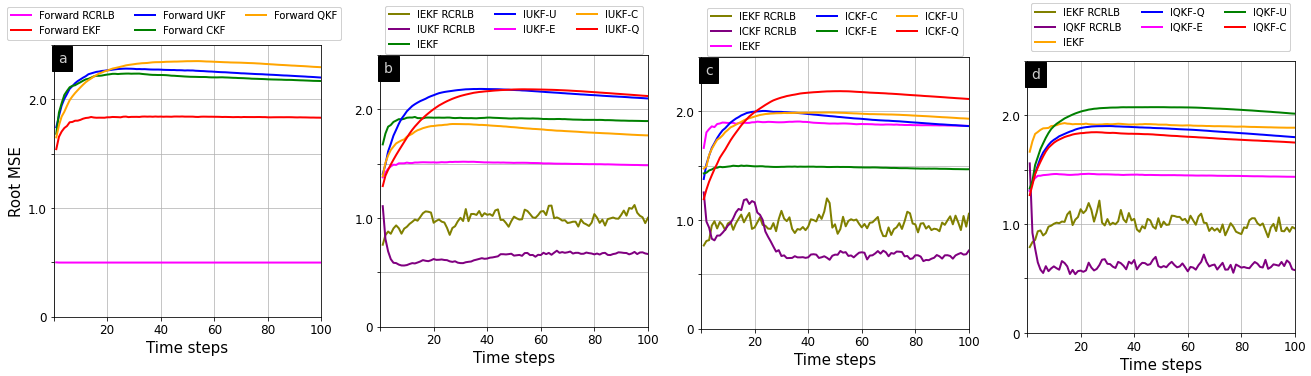}
  \caption{Time-averaged RMSE for (a) forward filters, (b) I-UKFs, (c) I-CKFs, and (d) I-QKFs for FM demodulator (averaged over $500$ runs). 
  }
 \label{fig:fmdemod}
\end{figure*}

Fig.~\ref{fig:fmdemod}a-d shows the time-averaged RMSE and RCRLB for state estimation for forward and inverse EKF, UKF, CKF, and QKF, including the inverse filters with mismatched forward filters. From Fig.~\ref{fig:fmdemod}a, we observe that the forward EKF has the lowest error while forward QKF performs worse than all other forward filters. Although with correct forward filter assumption, IUKF-U has higher error than I-EKF (Fig.~\ref{fig:fmdemod}b), IUKF-E outperforms I-EKF even with incorrect forward filter assumption. Interestingly, forward UKF and CKF have similar accuracy, but IUKF-C has significantly lower errors than IUKF-U. 

Unlike I-UKF, in Fig.~\ref{fig:fmdemod}c, both ICKF-C, and I-EKF have similar performance with correct forward filter assumption. Contrarily, IQKF-Q in Fig.~\ref{fig:fmdemod}d performs slightly better than I-EKF, but with higher computational efforts. However, both ICKF-E and IQKF-E, respectively, in Fig.~\ref{fig:fmdemod}c and \ref{fig:fmdemod}d, again outperform I-EKF. Since forward EKF estimates its state most accurately, we observe that IUKF-E, ICKF-E, and IQKF-E have the lowest estimation error even without true forward filter information.

\section{Summary}
\label{sec:summary}
We developed I-CKF, I-QKF, and I-CQKF to estimate the defender's state, given noisy measurements of the attacker's actions in highly non-linear systems. On the other hand, RKHS-CKF, as both forward and inverse filters, provides the desired state and parameter estimates even without any prior system model information. In the case of the perfect system model information, our developed methods can be further generalized to systems with non-Gaussian noises, continuous-time state evolution, or complex-valued states and observations. The proposed filters exploit the cubature and quadrature rules to approximate the recursive Bayesian integrals. The I-CKF's stability conditions are easily achieved for a stable forward CKF. The developed inverse filters are also consistent, provided that the initial estimate pair is consistent. 
Numerical experiments show that I-CKF and I-QKF outperform I-UKF even when they incorrectly assume the true form of the forward filter. However, I-QKF is computationally expensive, while I-CQKF provides reasonable estimates at lower computational costs. The non-trivial upshot of this result is that the forward filter does not need to be known exactly to the defender.

\bibliographystyle{IEEEtran}
\bibliography{main}
\end{document}